\newtheorem{theorem}{Theorem}[section]
\newtheorem{lemma}[theorem]{Lemma}
\newtheorem{proposition}[theorem]{Proposition}
\newtheorem{corollary}[theorem]{Corollary}
\theoremstyle{definition}
\newtheorem{remark}[theorem]{Remark}
\theoremstyle{remark}
\begin{document}
\baselineskip 15.5 pt

\title[On the lifting of the Nagata automorphism]
{On the lifting of the Nagata automorphism}

\author[A.Belov-Kanel and Jie-Tai Yu]
{Alexei Belov-Kanel and Jie-Tai Yu}
\address{Department of Mathematics,
Bar-Ilan University Ramat-Gan, 52900 Israel} \email{beloval@cs.biu.ac.il,\ kanelster@gmail.com}
\address{Department of Mathematics, The University of Hong
Kong, Hong Kong SAR, China} \email{yujt@hkusua.hku.hk,\
yujietai@yahoo.com}


\thanks{The research of Jie-Tai Yu was partially
supported by an RGC-GRF Grant.}

\subjclass[2000] {Primary 13S10, 16S10. Secondary 13F20, 13W20,
14R10, 16W20, 16Z05.}

\keywords{Automorphisms, coordinates,
degree estimate, subalgebras generated by two elements, polynomial
algebras,  free associative algebras, commutators, Jacobian,
lifting problem.}

\maketitle

\noindent $\mathbf{Abstract.}$\ It is proved that
the Nagata automorphism (Nagata coordinates, respectively) of the polynomial algebra $F[x,y,z]$ over a field $F$
 cannot be lifted to a
$z$-automorphism ($z$-coordinate, respectively) of the free associative algebra $F\langle x,y,z\rangle$. The proof is based on the following two new results which have their
own interests: degree estimate of ${Q*_FF\langle x_1,\dots,x_n\rangle}$ and tameness of the automorphism group
${\text{Aut}_Q(Q*_FF\langle x,y\rangle)}$.

\smallskip

\section{\bf Introduction and main results}

\smallskip

\noindent The long-standing famous Nagata conjecture for characteristic $0$ was proved
by Shestakov and Umirbaev \cite{SU1, SU2}, and a strong version
of the Nagata conjecture was proved by Umirbaev and Yu \cite{UY}.
That is, the Nagata automorphism
$(x-2y(y^2+xz)-(y^2+xz)^2z, y+(y^2+xz)z, z)$ (Nagata coordinates $x-2y(y^2+xz)-(y^2+xz)^2z$ and $y+(y^2+xz)z$ respectively)
 is (are) wild. In \cite{MSY, UY}, a stronger question (which implies the Nagata conjecture
and the strong Nagata conjecture) was raised:
whether the Nagata automorphism (coordinates) of the polynomial algebra $F[x,y,z]$ can be lifted to an automorphism
(coordinates) of the free associative $F\langle x,y,z\rangle$ over a field $F$? We can also formulate

\

\noindent {\bf The General Lifting Problem.}
Let $\phi=(f_1,\dots, f_n)$ be an automorphism of the polynomial algebra
$F[x_1,\dots,x_n]$ over a field $F$. Does there exists an $F$-automorphism
$\phi^{\prime}=(f_1^{\prime},\dots, f_n^{\prime})$ of the free associative algebra
$F\langle x_1,\dots, x_n\rangle$ such that each $f_i$ is the abelianization
of $f_i^{\prime}$?

\

\noindent For $n=2$, the answer of the above problem is positive,
as due to Jung \cite{J} and van der Kulk \cite{vdK}  every automorphism of $F[x,y]$ is
composation of linear and elementary automorphisms which are liftable
to automorphisms of $F\langle x,y\rangle$. Moreover, Makar-Limanov \cite{ML} and Czerniakiewicz \cite{Cz} proved independently that Aut$(F\langle x,y\rangle)$ is actually isomorphic to  Aut$(F[x,y])$, which implies
that the lifting is unique.



\

\noindent In this paper we prove the following new result,
which partially answers the question raised in \cite{UY} negatively. The result can be viewed
as the first  step to attack the general lifting problem. In
a forthcoming paper \cite{BY}, we will deal the general lifting problem.

\begin{theorem} \label{lifting}
Let $(f,\ g)$ be an wild $F[z]$-automorphism of $F[x,y,z]=F[z][x,y]$.
Then $(f,g,z)$, as
an $F$-automorphism of $F[x,y,z]$,  cannot be lifted to an
automorphism of $F\langle x,y,z\rangle$ fixing $z$.
\end{theorem}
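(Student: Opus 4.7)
The plan is to argue by contradiction, using as input the two auxiliary results announced in the abstract: the noncommutative degree estimate for $Q *_F F\langle x_1,\dots,x_n\rangle$ and the tameness of $\Aut_Q(Q *_F F\langle x,y\rangle)$. Suppose, toward a contradiction, that $(f,g,z)$ admits a lift to an $F$-automorphism $\Phi = (\tilde f,\tilde g, z)$ of $F\langle x,y,z\rangle$ fixing $z$.

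First I would exploit the standard identification $F\langle x,y,z\rangle \cong F[z] *_F F\langle x,y\rangle$ as $F$-algebras, under which ``fixing $z$'' is exactly the condition that $\Phi \in \Aut_Q(Q *_F F\langle x,y\rangle)$ with $Q = F[z]$. Applying the tameness theorem for this $Q$, $\Phi$ is a finite composition of linear and elementary $Q$-automorphisms. An elementary $Q$-automorphism here fixes $z$ together with one of the two variables (say $y$) and sends $x$ to $\alpha x + p$ with $\alpha \in F^{\times}$ and $p$ lying in the $F$-subalgebra of $Q *_F F\langle x,y\rangle$ generated by $y$ and $z$ (i.e., in the copy of $F\langle y,z\rangle$ sitting inside).

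Next I would push the conclusion through the canonical abelianization $\pi\colon F\langle x,y,z\rangle \twoheadrightarrow F[x,y,z]$, a surjective $F$-algebra map fixing $z$. Because $\pi$ is functorial and surjective, it induces a group homomorphism
\[
\Aut_{F[z]}(F\langle x,y,z\rangle)\longrightarrow \Aut_{F[z]}(F[x,y,z])
\]
sending $\Phi$ to $(f,g,z)$. The image under $\pi$ of an elementary (respectively linear) $F[z]$-automorphism of the free product is manifestly an elementary (respectively linear) $F[z]$-automorphism of $F[z][x,y]$, since the image of $p \in F\langle y,z\rangle$ is a commutative polynomial in $F[y,z]$. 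Therefore $(f,g,z)$ is tame as an $F[z]$-automorphism, contradicting the hypothesis that $(f,g)$ is wild over $F[z]$.

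The principal obstacle is clearly not this descent, which is essentially formal, but the tameness theorem for $\Aut_Q(Q *_F F\langle x,y\rangle)$ itself. I expect its proof to rest on the degree estimate for $Q *_F F\langle x_1,\dots,x_n\rangle$, in the spirit of the Shestakov--Umirbaev strategy but adapted to the noncommutative coefficient base $Q$: control of leading forms of elementary and composed automorphisms, combined with a Makar-Limanov/Czerniakiewicz-style analysis of the rank-two case over the noncommutative base. Establishing a sharp enough noncommutative degree estimate on the free product to preclude nontrivial cancellations is, in my view, the central technical challenge behind Theorem \ref{lifting}.
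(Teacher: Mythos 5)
Your reduction collapses the entire difficulty into the phrase ``applying the tameness theorem for this $Q$'' with $Q=F[z]$, and that is exactly where the argument breaks. The tameness result the paper proves (Theorem \ref{degreeincrease}) is for $Q$ an extension \emph{field} of $F$ (or a division ring); its proof via the degree estimate (Theorem \ref{degreeestimate}) needs invertibility of the coefficients to run the leading-form/elementary-reduction induction. For $Q=F[z]$ no such tameness statement is available, and it cannot simply be assumed: if $\Aut_{F[z]}(F[z]*_F F\langle x,y\rangle)$ were known to be generated by linear and elementary $F[z]$-automorphisms, then, as you observe, abelianization would finish the proof in one line --- which is precisely why that statement is essentially equivalent to the theorem being proved and not an off-the-shelf input. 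Indeed, the paper's own Theorem \ref{decomposation} only yields a decomposition of a $z$-automorphism of $F\langle x,y,z\rangle$ into $F[z]$-linear automorphisms and conjugates of elementary automorphisms by \emph{$F(z)$-linear} automorphisms, and the (stable) tameness question over $F[z]$ is explicitly deferred to a separate paper.

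What the paper actually does is pass to the field $Q=F(z)$, where tameness does hold, and then fight the resulting denominators: the elementary steps of the $F(z)$-decomposition may introduce coefficients in $F(z)\setminus F[z]$ (negative powers of $z$, possibly in sandwich position between variables), and the bulk of the work --- the sandwich lemma (Lemma \ref{LeSandwich}), the coefficient-improving lemma (Lemma \ref{LeCoeffImpr}), Lemma \ref{Lefinish}, Corollary \ref{Cofinish}, Lemmas \ref{Lm2new}--\ref{LeFinishSandwich} --- shows such negative powers, once produced by a nonlinear step, can never cancel in the strictly increasing process, contradicting the fact that a genuine lift lies in $F\langle x,y,z\rangle$. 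For Theorem \ref{lifting} one must additionally handle wild $(f,g)$ not covered by Theorem \ref{main}: there the non-$F[z]$ coefficients occur only in linear steps, and Lemma \ref{LeAdjoint} and Propositions \ref{PrAdjoint}, \ref{PrLinzTame} show that conjugating the nonlinear elementary steps by these linear automorphisms produces $z$-tame automorphisms, so $(f,g)$ would be a product of $z$-tame and $F[z]$-bilinear automorphisms, contradicting wildness. Your abelianization descent is formally fine, but without this analysis of denominators the proof has no content; you have cited as a lemma the very obstacle the paper is written to overcome. (A secondary inaccuracy: even over a field $Q$, the elementary automorphisms are not of the form $x\mapsto \alpha x+p$ with $p\in F\langle y,z\rangle$; they have $Q$-coefficients interspersed between the variables, as in Theorem \ref{degreeincrease}, which is what makes the sandwich phenomenon possible.)
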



\noindent The crucial step to prove Theorem \ref{lifting} is the following

\begin{theorem}     \label{main}
Let $(f,g)$ be a wild
$F[z]$-automorphism of $F[x,y,z]=F[z][x,y]$, which can be effectively obtained
as the product of the canonical sequence of uniquely determined
alternative operations (elementary $F(z)$-automorphisms), and the sequence contains an
elementary $F(z)$-automorphism of
the type $(x, y+z^{-k}x^l+\dots)$ or
$(x+z^{-k}y^l+\dots, y)$ where $l>1$. Then $(f,g,z)$, as
an $F$-automorphism of $F[x,y,z]$,  cannot be lifted to an
automorphism of $F\langle x,y,z\rangle$ fixing $z$.
\end{theorem}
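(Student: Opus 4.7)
The plan is to argue by contradiction. Suppose a lift $\Phi=(F,G,z)\in\Aut_z(F\langle x,y,z\rangle)$ of $(f,g,z)$ does exist, so $F,G\in F\langle x,y,z\rangle$ abelianise to $f,g$. The first move is to extend $\Phi$ to the free product $\mathcal{A}:=F(z)*_F F\langle x,y\rangle$. Because $\Phi$ fixes $z$, it extends uniquely to an $F(z)$-automorphism $\widetilde\Phi\in\Aut_{F(z)}(\mathcal{A})$ acting trivially on the $F(z)$-factor; the abelianisation of $\widetilde\Phi$ is precisely the $F(z)$-automorphism $(f,g)$ of $F(z)[x,y]$.

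Next I would invoke the tameness theorem for $\Aut_{F(z)}(\mathcal{A})$ announced in the abstract, decomposing
\[
\widetilde\Phi=T_m\circ T_{m-1}\circ\cdots\circ T_1,
\]
where each $T_i$ is an affine or triangular elementary $F(z)$-automorphism of $\mathcal{A}$. Abelianising gives the decomposition $(f,g)=\bar T_m\circ\cdots\circ\bar T_1$ into elementary $F(z)$-automorphisms of $F(z)[x,y]$. By the Jung--van der Kulk uniqueness of the canonical form of a tame two-variable automorphism over the field $F(z)$, this sequence agrees (after fusing neighbouring elementaries of the same type and absorbing affine parts) with the canonical sequence prescribed in the hypothesis. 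In particular, there exists an index $i_0$ for which $\bar T_{i_0}$ has the form $(x,\,y+z^{-k}x^l+\cdots)$ with $k\ge1$ and $l\ge2$; the case $(x+z^{-k}y^l+\cdots,y)$ is handled by symmetry.

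The core of the argument is the degree estimate on $\mathcal{A}$, the other main result quoted in the abstract. Extend the $z$-adic valuation $v_z$ of $F(z)$ to $\mathcal{A}$ by setting $v_z\bigl(\sum a_I w_I\bigr)=\min_I v_z(a_I)$, where $a_I\in F(z)$ and $w_I$ is a noncommutative monomial in $x,y$. Because $F,G\in F\langle x,y,z\rangle$, one has $v_z(\widetilde\Phi(x)),v_z(\widetilde\Phi(y))\ge0$. On the other hand, the lift $T_{i_0}$ of the bad step is forced to contain a contribution with $v_z=-k$ and total $x$-degree $l\ge2$. Tracing through the subsequent $T_{i_0+1},\ldots,T_m$, the degree estimate of $\mathcal{A}$ (the noncommutative Shestakov--Umirbaev--type inequality in the $F(z)$-coefficient free product) forbids the cancellation of this $v_z=-k$ contribution, because any later elementary step that could absorb it would itself have to carry an even more negative $v_z$, pushing the problem down the line. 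Inducting on the remaining length $m-i_0$ yields $\min\bigl(v_z(\widetilde\Phi(x)),v_z(\widetilde\Phi(y))\bigr)<0$, contradicting $F,G\in F\langle x,y,z\rangle$.

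The main obstacle is the third step: making precise the noncommutative degree inequality on $\mathcal{A}$ that couples the $x,y$-degree with the $z$-adic valuation under composition with elementary automorphisms. The hypothesis $l>1$ is crucial, because for $l=1$ the offending step is $F(z)$-linear in $x$ and can be cleared by a later affine manipulation, whereas the nonlinearity $l\ge2$ is exactly what engages the degree estimate and forces the pole $z^{-k}$ to persist.
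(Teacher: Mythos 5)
Your outline follows the paper's strategy: extend the hypothetical lift to an $F(z)$-automorphism of $F(z)*_FF\langle x,y\rangle$, invoke the canonical elementary decomposition (Theorem \ref{degreeincrease}), match it under abelianisation with the canonical commutative sequence so that some elementary factor carries a pole $z^{-k}$ on a term of degree $l\ge 2$, and then argue that this pole survives to the end, contradicting the fact that the lift lies in $F\langle x,y,z\rangle$. But the step you yourself flag as ``the main obstacle'' is precisely the content of the paper's proof, and your sketch of it does not work as stated. First, your valuation $v_z$ with ``$\min_I v_z(a_I)$'' ignores the structure of elements of the free product, whose monomials have coefficients from $F(z)$ \emph{interspersed} between the variables. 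This matters: a lift of the elementary step $(x,\,y+z^{-k}x^l)$ can have the form $y\to y+z^{n_0}xz^{n_1}\cdots xz^{n_s}$ with $\sum n_i=-k$ but $n_0,n_s\ge 0$, so the pole can hide in interior (sandwich) positions while all end coefficients are polynomial. The paper needs a separate mechanism for this case (Lemma \ref{LeFinishSandwich} producing a sandwich, and Lemma \ref{LeSandwich} showing sandwiches persist), distinct from the mechanism for negative right/left $z$-degree (Lemma \ref{Lefinish}, Corollary \ref{Cofinish}, Lemmas \ref{Lm2new}, \ref{LeLinindNonCancell}, \ref{Leverylast}); a single scalar valuation induction does not see this dichotomy.

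Second, your heuristic for non-cancellation --- that any later elementary step able to absorb the $v_z=-k$ contribution ``would itself have to carry an even more negative $v_z$'' --- is not a valid argument and is not how cancellation threatens to occur. A later step $y\to y+G(x)$ with coefficients in $F[z]$ substitutes the full earlier images into $G$, and the danger is that cross-terms of this substitution cancel the offending monomials; ruling this out is exactly what the paper's lemmas do, by isolating the highest-degree sandwiches (resp.\ the highest-degree terms of zero right $z$-degree), substituting them into the leading form $\widetilde G$, and showing by a degree comparison that the resulting terms cannot be matched by any other contribution. The degree estimate (Theorem \ref{degreeestimate}) enters only to guarantee the strictly-increasing structure of the decomposition, not as a ready-made inequality ``coupling $x,y$-degree with $z$-adic valuation.'' So while your global architecture coincides with the paper's, the proposal is missing the actual proof of persistence of the pole, which is the heart of the theorem; as written it is an announcement of the needed lemma rather than a proof of it.
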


\begin{corollary}
The Nagata automorphism cannot be lifted to an
automorphism of $F\langle x,y,z\rangle$ fixing $z$.
\end{corollary}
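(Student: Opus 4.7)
The plan is to verify the hypotheses of Theorem \ref{main} for the Nagata automorphism and apply it directly. Write $\sigma = (f, g, z)$ with $f = x - 2y(y^2+xz) - (y^2+xz)^2 z$, $g = y + (y^2+xz)z$, and set $u = y^2 + xz$. Since $\sigma$ fixes $z$, the pair $(f, g)$ is an $F[z]$-automorphism of $F[z][x,y]$; by Shestakov--Umirbaev \cite{SU1,SU2}, $\sigma$ is wild as an $F$-automorphism of $F[x,y,z]$, and any tame $F[z]$-decomposition of $(f,g)$ would immediately yield a tame $F$-decomposition of $\sigma$, so $(f,g)$ is wild as an $F[z]$-automorphism as well.

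Next I exhibit the canonical elementary $F(z)$-decomposition of $(f, g)$. Consider the three elementary $F(z)$-automorphisms
\[
\psi_1 = (x + z^{-1}y^2,\ y),\qquad \psi_2 = (x,\ y + z^2 x),\qquad \psi_3 = (x - z^{-1}y^2,\ y).
\]
Iterating these substitutions in order starting from $(x, y)$ produces successively $(x + z^{-1}y^2,\ y)$, then $(x + z^{-1}y^2,\ y + z^2 x + z y^2) = (x + z^{-1}y^2,\ g)$ via $z^2 x + z y^2 = zu$, and finally $(x + z^{-1}y^2 - z^{-1} g^2,\ g) = (f, g)$ via $g^2 - y^2 = 2yuz + u^2 z^2$. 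This is the canonical Jung--van der Kulk sequence for $(f, g)$: the reverse reduction, starting from $(f, g)$, must first kill the leading $(x,y)$-form $-y^4 z$ of $f$ by adding $z^{-1} g^2$ (whose leading $(x,y)$-form is $y^4 z$), yielding $(x + z^{-1}y^2, g)$; the two remaining reduction steps are then pinned down by the standard degree-reduction algorithm, and the forward decomposition is the reversal.

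The first factor $\psi_1 = (x + z^{-1}y^2,\ y)$ has exactly the form $(x + z^{-k}y^l + \cdots,\ y)$ required by Theorem \ref{main}, with $k = 1$ and $l = 2 > 1$, so Theorem \ref{main} applies and $\sigma$ cannot be lifted to an automorphism of $F\langle x, y, z\rangle$ fixing $z$. I expect the main obstacle in writing this out carefully to be the justification that the displayed factorization is the ``canonical sequence of uniquely determined alternative operations'' in the precise sense of Theorem \ref{main}; concretely, that every Jung--van der Kulk type $F(z)$-decomposition of $(f, g)$ must contain an elementary of the forbidden type $(x + z^{-k}y^l + \cdots,\ y)$ or $(x,\ y + z^{-k}x^l + \cdots)$ with $l > 1$. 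Once that is settled, the corollary follows at once from Theorem \ref{main}.
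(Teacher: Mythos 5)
Your proposal is correct and follows the paper's intended route: the paper states this corollary as an immediate consequence of Theorem \ref{main}, the point being precisely that the canonical Jung--van der Kulk reduction of the Nagata pair over $F(z)$ involves the elementary step with coefficient $z^{-1}$ and exponent $l=2>1$, which is exactly the factorization $(x+z^{-1}y^2,y)$, $(x,y+z^2x)$, $(x-z^{-1}y^2,y)$ you exhibit and verify. Your explicit check of the decomposition (and of the first reduction step killing the leading form $-y^4z$ of $f$ by $z^{-1}g^2$) is a correct and more detailed account of what the paper leaves implicit.
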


\begin{corollary}
Let $(f,g)$ be a wild
$F[z]$-automorphism of $F[x,y,z]=F[z][x,y]$.
Then neither $f$ nor $g$ can be lifted
to a $z$-coordinate of $F\langle x,y,z\rangle$. In particular,
the Nagata coordinates $x-2y(y^2+xz)-(y^2+xz)^2z$ and  $y+(y^2+xz)z$
cannot be lifted to any $z$-coordinate of $F\langle x,y,z\rangle$.
\end{corollary}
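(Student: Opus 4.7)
The strategy is to reduce both claims to Theorem~\ref{lifting} by exploiting the fact that, over $F[z]$, the partner of a coordinate in $F[z][x,y]$ is unique up to an elementary correction of the other variable. If either $f$ or $g$ were a $z$-coordinate of $F\langle x,y,z\rangle$, then the full $z$-automorphism $(f,g,z)$ would also lift, contradicting Theorem~\ref{lifting}.

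Suppose for contradiction that $f$ lifts to a $z$-coordinate $f'\in F\langle x,y,z\rangle$. I would choose an automorphism $(f',h',z)$ of $F\langle x,y,z\rangle$ fixing $z$, abelianise to obtain an $F[z]$-automorphism $\Phi:=(f,h,z)$ of $F[z][x,y]$ sharing its first coordinate with the given $(f,g,z)$, and then study $\tau:=(f,g,z)^{-1}\circ\Phi$. A direct inspection shows that $\tau$ fixes both $x$ and $z$, so it is an $F[z,x]$-automorphism of $F[z,x][y]$; since $F[z,x]^{*}=F^{*}$, any such automorphism has the elementary form $y\mapsto cy+\beta(x,z)$ with $c\in F^{*}$ and $\beta\in F[x,z]$.

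It then follows that $(f,g,z)=\Phi\circ\tau^{-1}$, with $\tau^{-1}$ again elementary. Elementary automorphisms of this shape lift tautologically to $F\langle x,y,z\rangle$ (choose any non-commutative ordering of $x,z$ in $\beta$ and read the same formula in the free algebra; the obvious inverse formula works by direct verification), producing a lift $\widetilde{\tau}$ fixing $z$. The composition $(f',h',z)\circ\widetilde{\tau}^{-1}$ is then an automorphism of $F\langle x,y,z\rangle$ fixing $z$ whose abelianization equals $(f,g,z)$, contradicting Theorem~\ref{lifting}. The case where $g$ is a $z$-coordinate is handled by the entirely symmetric argument, interchanging the roles of the first and second coordinates.

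The Nagata coordinates are the two components of the Nagata automorphism, which is wild as an $F[z]$-automorphism of $F[z][x,y]$ by Shestakov--Umirbaev, so the general statement specialises directly to the ``in particular'' assertion. I do not anticipate any serious obstacle beyond Theorem~\ref{lifting} itself: the only delicate point is the partner-uniqueness observation, and that reduces to the triviality $F[z,x]^{*}=F^{*}$.
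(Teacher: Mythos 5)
Your argument is correct and is essentially the paper's own proof: both reduce to Theorem~\ref{lifting} via the observation that any $F[z]$-automorphism $(f,h)$ sharing the coordinate $f$ differs from $(f,g)$ by an elementary $F[z]$-automorphism fixing $x$ (your $\tau$, the paper's $(x,h_1)$), which lifts, so liftability of $(f,h,z)$ would force liftability of $(f,g,z)$. Your write-up merely spells out the partner-uniqueness step (via $F[x,z]^{*}=F^{*}$) that the paper calls obvious.
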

\begin{proof}
Suppose $(f,h)$ is an $F[z]$-automorphism, then obviously
$(f,h)$ is the product of $(f,g)$ and an elementary
$F[z]$-automorphism of the type $(x,h_1)$. Therefore
$(f,h)$ is liftable if and only if $(f,g)$ is liftable.
Hence any $F[z]$-automorphism of the type $(f,h)$
is not liftable. Therefore $f$ cannot be lifted to
a $z$-coordinate of $F\langle x,y,z\rangle$. Same for $g$.
\end{proof}

\noindent Crucial to the proof of Theorem \ref{main} is the following new result,
that implies that the automorphism group
${\text{Aut}_Q(Q*_FF\langle x,y\rangle)}$ is tame, which has
its own interests.

\begin{theorem}[on degree increasing process]   \label{degreeincrease}
Let  $Q$ be an extension field over a field $F$.
A $Q$-automorphism of $Q*_FF\langle x,y\rangle$ can be effectively obtained as the
product of a sequence of
uniquely determined alternating operations (elementary automorphisms) of the following types:

\begin{itemize}
    \item  $\quad x\to x,\ y\to ryr'+\sum r_{0}xr_{1}x\cdots
r_kxr_{k+1},$
    \item $x\to qxq'+q_{0}\sum yq_{1}y\cdots q_kyq_{k+1},\quad y\to y$
\end{itemize}
where $r,q, r_{j}, q_{j}\in Q$.
\end{theorem}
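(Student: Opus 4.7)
The strategy is to mimic the degree-decreasing algorithms of Jung--van der Kulk for $F[x,y]$ and Makar-Limanov--Czerniakiewicz for $F\langle x,y\rangle$ inside the free product $Q*_F F\langle x,y\rangle$. For $h \in Q*_F F\langle x,y\rangle$, define $\deg h$ to be the maximal number of letters $x,y$ occurring in a monomial of its normal form; the $Q$-scalars sitting between the letters do not contribute. For a $Q$-automorphism $\varphi = (f,g)$, I track the pair $(\deg f, \deg g)$ and work to decrease it by composing with the elementary operations of the two types listed in the statement.

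The key ingredient is a degree estimate for $Q*_F F\langle x,y\rangle$, the companion result advertised in the abstract: if $\varphi = (f, g)$ is a $Q$-automorphism with $\deg f, \deg g \ge 2$, then one of $\deg f, \deg g$ divides the other, and the top homogeneous component of the larger of $f, g$ can be written as a $Q$-linear combination of words $r_0 f r_1 f \cdots r_k f r_{k+1}$ (or symmetrically in $g$) with $r_i \in Q$. The natural route to this is to pass to the associated graded with respect to the length filtration, which is still of the form $Q *_F (\text{free algebra})$ because the free-product filtration is respected, and then argue on leading forms in the spirit of Makar-Limanov, while keeping careful track of the $Q$-scalars that may interleave the $x$'s and $y$'s in the normal forms of $f^n$ or $fgfg\cdots$.

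Granting the degree estimate, the reduction step is immediate. Say $\deg g \ge \deg f$ and write the top of $g$ as $\sum r_0 f r_1 f \cdots r_k f r_{k+1}$. Composing $\varphi$ on the right with the inverse of the type-one operation $\psi: x \mapsto x,\ y \mapsto y + \sum r_0 x r_1 x \cdots r_k x r_{k+1}$ produces a new $Q$-automorphism with strictly smaller $\deg g$ and the same $f$. The next reduction is then forced to be of the opposite type, since otherwise the previous step would have been unnecessary; this explains the alternation asserted in the statement. Iterating, we eventually reach $\deg f = \deg g = 1$, a $Q$-affine automorphism that is itself a product of one operation of each type.

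Uniqueness at each step holds because the top homogeneous component of $f$ or $g$ is an intrinsic invariant of $\varphi$, which forces the coefficients $r_i$ (or $q_i$) of the elementary operation applied at that step; termination follows from the strict decrease of $\deg f + \deg g$. The main obstacle, by a considerable margin, is the degree estimate itself: in contrast to the classical setting $Q = F$, the elements of $Q$ no longer centralize the letters $x, y$ in the free product, so one must rule out subtle cancellations that could in principle arise when $Q$-coefficients interleave with $x$'s and $y$'s in high powers of $f$ and $g$. Overcoming this is exactly the content of the degree-estimate theorem for $Q *_F F\langle x_1, \dots, x_n\rangle$ invoked above.
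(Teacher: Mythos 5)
Your proposal takes essentially the same route as the paper: the paper's own (very terse) proof likewise invokes the companion degree-estimate theorem for $Q*_FF\langle x,y\rangle$ to conclude that for a non-linear $Q$-automorphism one leading form must be $Q$-proportional to a power of the other, then removes it by an elementary operation of the stated type and finishes by induction on degree. Your discussion of the reduction step, alternation, and uniqueness is if anything more detailed than the paper's argument, which consists of exactly this reduction plus the phrase ``now the proof is done by induction.''
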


\noindent The following new result of degree estimate  is also essential to the proof
of Theorem \ref{main}.

\begin{theorem}[Degree estimate]    \label{degreeestimate}
Let  $Q$ be an extension field of a field $F$.
Let $A = Q*_FF\langle x_1,\dots, x_n\rangle$ be a co-product
of $Q$ and the free associative algebra $F\langle x_1,\dots, x_n\rangle$ over $F$.
Suppose  $f,g\in A$ are algebraically independent over $Q$,
$f^+$ and $g^+$ are algebraically independent over $Q$; or $f^+$ and $g^+$
are algebraically dependent,  and neither $f^+$ is
$Q$-proportional to a power $g^+$, nor $g^+$ is
$Q$-proportional to a power $f^+$.  Let $P \in
Q*_FF\langle x,y\rangle\backslash Q$.
Then
$$\deg(P(f,g))
\geq
\frac{\deg([f,\ g])}{\deg(fg)}w_{\deg(f), \deg(g)}(P),$$
where the degree is the usual homogeneous degree
with respect to $x_1,\dots,x_n$ and $w_{r,s}$ is the weight degree
with respect to $r,s.$\end{theorem}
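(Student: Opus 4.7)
The plan is to extend the Shestakov--Umirbaev / Makar-Limanov--Yu noncommutative degree estimate to the coproduct $A = Q *_F F\langle x_1,\dots,x_n\rangle$, working with the natural $x$-grading (under which $A$ is itself graded, with $Q$ sitting in degree $0$) and passing to leading forms $a \mapsto a^+$. Since $f$, $g$, and $P$ involve only finitely many elements of $Q$, I may reduce at the outset to the case where $Q$ is a finitely generated extension of $F$.

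The first alternative in the hypothesis, that $f^+$ and $g^+$ are algebraically independent over $Q$, is the easy one: a coproduct analogue of Bergman's centralizer theorem implies that algebraically independent elements of $A$ do not commute, whence $[f^+,g^+]\neq 0$ and $\deg[f,g] = \deg f + \deg g = \deg(fg)$. Moreover no cancellation occurs among the leading terms of $P(f,g)$, so $\deg P(f,g) = w_{\deg f,\deg g}(P)$ and the claimed inequality holds with coefficient~$1$.

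The essential case is when $f^+$ and $g^+$ are algebraically dependent but neither is $Q$-proportional to a power of the other. Here I would first establish two structural inputs in $A$: a centralizer-type statement saying that algebraically dependent leading forms commute (so $[f^+,g^+]=0$, giving $\deg[f,g]<\deg(fg)$), and a common-root description of commuting pairs in $A$ --- up to $Q$-scalars and lower order terms, $f^+$ and $g^+$ should be monomials in a common element $h\in A$, the hypothesis that neither is a power of the other ruling out degenerate configurations. With this structural picture of the subalgebra generated by $f^+$ and $g^+$ in hand, I would expand $P(f,g)$ by its weight components: any degree drop of $P^+(f^+,g^+)$ below the naive $w_{\deg f,\deg g}(P)$ must proceed through the algebraic relation between $f^+$ and $g^+$, and each application of this relation trades one unit of $(x,y)$-weight for at most $\deg(fg)-\deg[f,g]$ units of $x$-degree. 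Summing this defect over the $w_{\deg f,\deg g}(P)$ units of weight available in $P$ produces exactly the ratio $\deg[f,g]/\deg(fg)$ in front of $w_{\deg f,\deg g}(P)$, as claimed.

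The principal obstacle is that $Q$ is not central in $A$: elements of $Q$ do not commute with the $x_i$, so the graded algebra is not a tensor product $Q \otimes_F F\langle x_1,\dots,x_n\rangle$, and the classical free-algebra tools (centralizer theorems, common-root descriptions of commuting pairs, Magnus or Fox-derivative calculations) must be redone directly inside the coproduct rather than imported by change of scalars. Once these two structural inputs --- commutativity of algebraically dependent leading forms over $Q$, and the common-root description --- are secured in the coproduct setting, the Shestakov--Umirbaev bookkeeping on weight-degree reductions carries over with only minor adjustments accounting for the noncentrality of $Q$, and yields the stated estimate.
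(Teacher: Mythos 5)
Your overall strategy---pass to leading forms, secure a Bergman-type centralizer/common-root structure theorem in the coproduct, then run Shestakov--Umirbaev/Makar-Limanov--Yu weight bookkeeping---is the same route the paper itself indicates: its proof of Theorem \ref{degreeestimate} consists only of the statement that the argument is ``similar to'' the Li--Yu degree estimate, with Bergman's Lemma on centralizers (Bergman's Lemma on radicals in the characteristic-zero case of Makar-Limanov--Yu) as the key input. So at the level of strategy you and the paper agree, and your easy case (when $f^+,g^+$ are algebraically independent over $Q$, so that $\deg P(f,g)=w_{\deg f,\deg g}(P)$ and the inequality is trivial because the ratio is at most $1$) is fine.

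The gap is in the essential case, and it is twofold. First, the two ``structural inputs'' you defer---commutativity of algebraically dependent leading forms over the noncentral $Q$, and a common-root description compatible with the paper's asymmetric notion of $Q$-proportionality $u=\sum_i p_i v q_i$---are not minor adaptations to be ``secured''; in Li--Yu the corresponding facts are obtained by embedding into Hahn--Laurent (Malcev--Neumann) series rings, which is precisely the technology built for noncentral coefficients, and you give no indication of how your direct-in-the-coproduct versions would be proved. Second, and more seriously, your quantitative accounting does not yield the stated bound: if each unit of weight could lose up to $\deg(fg)-\deg[f,g]$ units of degree, then over $w:=w_{\deg f,\deg g}(P)$ units the total loss would be $w\,(\deg(fg)-\deg[f,g])$, giving $\deg P(f,g)\ge w\,(1-\deg(fg)+\deg[f,g])$, which is not $\frac{\deg[f,g]}{\deg(fg)}\,w$ and is typically vacuous; the defect per unit of weight compatible with the claimed estimate is the ratio $(\deg(fg)-\deg[f,g])/\deg(fg)$, and proving that cascading cancellations of leading terms cannot exceed this is exactly the content of the theorem---it is what Bergman's lemmas are invoked for, and it cannot be replaced by counting ``applications of the relation.'' Note also that for the only case the paper actually needs ($Q=F(z)$), the Remark immediately following Theorem \ref{degreeestimate} gives a cheaper route you could have used: a substitution $x\mapsto P_1(z)xP_2(z)$, $y\mapsto R_1(z)yR_2(z)$ with suitable $P_i,R_i\in F[z]$ moves everything into $F\langle x,y,z\rangle$, where the known degree estimates of Makar-Limanov--Yu and Li--Yu apply directly.
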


\noindent Note that $u$ is {\bf proportional} to $v$ for $u,\ v\in Q*_FF\langle x_1,\dots, x_n\rangle$
means that there exist $p_1,\dots, p_m; q_1,\dots, q_m\in Q$ such that
$u=\Sigma_{i=1}^m p_ivq_i$ (it is important that `proportional'
is not reflexive, i.e. $u$ is proportional to $v$
does not imply $v$ is proportional to $u$), and that $f^+$ is the highest homogeneous form of $f$.

\

\begin{remark}
Theorem \ref{degreeestimate} is still valid for an arbitrary
division ring $Q$ over a field $F$. The proof is almost the same.
When $Q=F(z)$ then the result can be directly deduced from the
degree estimate in \cite{MLY, YuYungChang} via substitution
$\psi:\ x\to P_1(z)xP_2(z); y\to R_1(z)yR_2(z)$ for appropriate
$P_i, R_i\in F[z]$. For any element $\tau$ in $Q*_FF\langle
x,y\rangle$ there exist $P_i, R_i\in F[z]$ such that
$\psi(\tau)\in F\langle x,y,z\rangle$. In the sequel we only use
this special case regarding the lifting problem.
\end{remark}




\section{Proofs}

\noindent{\bf Proof of Theorem \ref{degreeestimate}}
Similar to the proof of the main result of Li and Yu
\cite{YuYungChang}, where Bergman's Lemma \cite{B1, B2} on
centralizers is used. See also Makar-Limanov and Yu \cite{MLY} for
the special case of characteristic $0$, where Bergman's Lemma on
radical \cite{B1, B2} is used.\qed

\

\noindent {\bf Proof of Theorem \ref{degreeincrease}.} Let $\phi=(f,g)$
be a  $Q$-automorphism in $\text{Aut}_Q(Q*_FF\langle x,y\rangle)$ which is not linear,
namely,
$$\deg_{x,y}(f)+\deg_{x,y}(g)\ge 3.$$
By Theorem 1.5, we obtain
that either a power of $f^+$ is proportional to $g^+$,
or a power of $g^+$ is proportional to $f^+$. Now the proof is done by
induction.\qed

\

\noindent To prove the  main result, we need a few more
lemmas.

\

\noindent {\bf Definition.} Let $D$ be a  domain containing a field $K$,\ $E$
the field of fractions of $D$.
A monomial $\in E*_KK\langle
x, y\rangle$ of the following form,
$$...... {ptq} ......\ \ \ $$
 where $t\in E\backslash D$,\ $p, q\in\{x, y\}$,\
is  called a {\it sandwich monomial}, or just a {\it sandwich} for short.

\

\begin{lemma}[on sandwich preserving] \label{LeSandwich}
In the constructive decomposation in Theorem \ref{degreeincrease}, suppose
a sandwich  $... {ptq} ...$ (where $p,q\in\{x,y\}$,\ $t\in
F(z)\backslash F[z]$,  appears on some step during the process
of the effective decomposation, then there will be
some sandwich in any future step.
\end{lemma}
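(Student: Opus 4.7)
The plan is to proceed by induction on the step index of the effective decomposition from Theorem \ref{degreeincrease}. Let $(f_0,g_0),(f_1,g_1),\dots$ be the sequence of pairs produced, so that $(f_{i+1},g_{i+1})$ is obtained from $(f_i,g_i)$ by exactly one elementary operation of the two allowed types. Assuming that $(f_k,g_k)$ contains a sandwich monomial $\cdots ptq\cdots$ (with $t\in F(z)\setminus F[z]$ and $p,q\in\{x,y\}$), I want to show $(f_{k+1},g_{k+1})$ also contains a sandwich. By the $x\leftrightarrow y$ symmetry between the two operation types, I may assume that the step acts on $g$, so $f_{k+1}=f_k$ and $g_{k+1}=rg_kr'+P(f_k)$, where $P(f_k)=\sum r_0 f_k r_1 f_k\cdots f_k r_{j+1}$ and $r,r',r_i\in F(z)$.

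If the sandwich sits in $f_k$, then $f_{k+1}=f_k$ inherits it directly and we are done. Otherwise the sandwich sits in $g_k$. Since multiplying by $r$ on the left and $r'$ on the right only alters the outermost coefficients of each monomial of $g_k$, the term $rg_kr'$ keeps exactly the same internal sandwich positions as $g_k$; hence the only way for $g_{k+1}$ to lose this sandwich is by an exact cancellation against $P(f_k)$. Next I would classify the possible origins of a sandwich inside $P(f_k)$: either (i) the sandwich comes from some coefficient $r_i\in F(z)\setminus F[z]$ inserted between two copies of $f_k$, or (ii) it already lives inside $f_k$. Case (ii) is disposed of by the same inheritance argument as before, since $f_{k+1}=f_k$ then still contains a sandwich. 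So the genuinely nontrivial situation is case (i).

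In case (i) the essential input is the uniqueness clause of Theorem \ref{degreeincrease}: the coefficients $r,r',r_i$ are not free parameters but are forced by the leading homogeneous forms of $f_k$ and $g_k$. To exploit this I would select the sandwich monomial in $g_k$ which is maximal with respect to a suitable ordering on reduced words in $F(z)*_FF\langle x,y\rangle$ (for instance, lexicographic on $(x,y)$-degree and then on the position and denominator $t$ of the sandwich), track this distinguished monomial through the sum $rg_kr'+P(f_k)$, and show that its cancellation by any sandwich term of $P(f_k)$ would force a relation on the $r_i$ which is incompatible with their algorithmic determination from the leading form. The main obstacle I expect is precisely this combinatorial cancellation step: making rigorous the intuition that the canonically chosen coefficients cannot conspire to annihilate every sandwich monomial simultaneously. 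This should reduce to a careful monomial-by-monomial bookkeeping in $F(z)*_FF\langle x,y\rangle$, distinguishing the behaviour of the $F(z)$-coefficients at the sandwich position from that of the surrounding $\{x,y\}$-word, after which the induction closes and the lemma follows.
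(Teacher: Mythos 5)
Your reduction is sound as far as it goes: induction over the steps, the trivial disposal of the cases where the sandwich lies in the unchanged component $f_k$ (or where a sandwich of $P(f_k)$ comes from a sandwich already inside $f_k$), and the identification of the dangerous configuration --- a sandwich present only in $g_k$, threatened by cancellation against sandwich monomials of $P(f_k)$ created by coefficients $r_i\in F(z)\setminus F[z]$ sitting between copies of $f_k$ --- is the right bookkeeping. But at exactly that point the proposal stops being a proof: you say you \emph{would} pick a maximal sandwich monomial, track it, and show that its cancellation would ``force a relation on the $r_i$ incompatible with their algorithmic determination,'' and you yourself flag this cancellation analysis as the main obstacle. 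That analysis is the entire content of the lemma, and it is not supplied; nothing in the uniqueness clause of Theorem \ref{degreeincrease} prevents particular products of the coefficients of an elementary step from lying in $F[z]$ or from reproducing a prescribed monomial, so ``incompatibility with canonicity'' is not by itself a usable obstruction.

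The paper closes this step by a different mechanism, a leading-form/degree argument rather than an appeal to canonicity. Writing $\widetilde{G}$ for the multilinearization of the top-degree part (degree $m$) of the elementary operation, $S$ for the sum of sandwich monomials of maximal degree and $T$ for the sum of all monomials of maximal degree of the element being substituted, one considers $\widetilde{G}(S,T,\dots,T)$ (or $\widetilde{G}(S,\dots,S)$ when $\deg S=\deg T$): its terms are sandwiches, their sum is nonzero because a nonzero multihomogeneous $H$ cannot vanish under such substitutions, and any sandwich whose fractional coefficient is a junction coefficient $q^i_k$ of the elementary map is shown to have degree strictly less than $\deg(s)+(m-1)\deg(T)$, so it cannot cancel them. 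Thus the surviving sandwich is a \emph{new} one of strictly larger degree, produced by the substitution itself, and cancellation is excluded by a degree inequality; by contrast, tracking the \emph{original} maximal sandwich of $g_k$, as you propose, runs head-on into the cancellations you worry about, since junction sandwiches can have exactly its degree. So the proposal has a genuine gap at the decisive step, and the strategy sketched for filling it is not the one that works in the paper.
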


\begin{proof}
Let $f$ be the polynomial obtained in the  $(n-1)^{-th}$ step of
the effective operation in Theorem \ref{degreeincrease},
$k=\deg(f)$.  Take all sandwiches $s_\alpha$ of the maximum total
degree with respect to $x$ and $y$. Let $S=\sum s_\alpha$ be their
sum. Let $T=\sum t_\beta$ be the sum of components (monomials)
$t_\beta$ of $f$ maximum total degree respect to $x$ and $y$. It
is possible that  $s_\alpha=t_\beta$ for some $\alpha,\beta$, then
$\deg(s_\alpha)=\deg(t_\beta)$ for all $\alpha,\beta$. In this
case $T=S+D$.

\

\noindent Suppose the $n^{-th}$ step has the following form $x\to
x, y\to y+G(x)$. Let $\bar{G}$ be the sum of monomials in $G$ with
the maximum degree. Then $\bar{G}(x)=\widetilde{G}(x,\dots,x)$
where
$$\widetilde{G}(x_1,\dots,x_n)=\sum_iq_{i,1}x_1q_{i,2}x_2\cdots
x_mq_{i,m+1},\quad q_{ij}\in F(z),$$
\noindent $m$ be the degree
of the $n^{-th}$ step operation (elementary automorphism).

\

\noindent Let $\deg(S)<\deg(T)$. Consider elements of the form
$\widetilde{G}(S,T,\dots,T)$. It is a linear combination of
sandwiches. All of them have the following form
$$q_0s_iq_1t_{2}\cdots
t_{m-1}q_m,\quad q_i\in Q.$$
Their sum is not zero, because for any  polynomial of the form
$H=\sum_iq_{i_1}x_1q_{i_2}x_2\cdots x_mq_{i_{m+1}}$ such that
$H(x,\dots,x)\ne 0$ and for any $S, T\notin Q$,\ \
$H(S,T,\dots,T)\ne 0$.

\

\noindent If $\deg(S)=\deg(T)$, we consider elements of the form
$\widetilde{G}(S,S,\dots,S)$. It is a linear combination of
sandwiches. All of them have the following form
$$q_0s_1q_1\cdots
s_{m}q_m,\quad q_i\in Q.$$ \noindent $s_i$ are monomials from $S$.
Their sum is not zero, because for any  polynomial of the form
$H=\sum_iq_{i_1}x_1q_{i_2}x_2\cdots x_mq_{i_{m+1}}$ such that
$H(x,\dots,x)\ne 0$ and for any $S\in Q$,\ \ $H(S,\dots,S)\ne 0$.

\

\noindent Now we are going to prove (via degree estimate) that
they cannot cancel out by other monomials (which must be
sandwiches). That is, there are no other sandwiches which are in
this form. They cannot be produced by $H(R_1,\dots,R_m)$ where
$R_i$ are monomials either from $S$ or $T$. This can be easily
seen from the following argument:

\

\noindent Suppose $\deg(S)=\deg(T)$ and $D\ne 0$. Then
if we substitute monomials forming $S$ and $D$ in different
`words', the outcomes would be different. Similarly suppose $\deg(S)<\deg(T)$ and $D\ne 0$,
then substituting monomials forming $S$ and $T$ in different `words',
the outcomes would be different. Suppose $D=0$, i.e. $S=T$. Then
$H(T,\dots,T)=H(S,\dots,S)$. Obviously in this case we need to do nothing.

\

\noindent Suppose we get  such a sandwich because an element from
$F(z)\backslash F[z]$ appears between summands of polynomials,
obtained  by the the (previous) $(n-1)^{-th}$ step. It means that
`fractional coefficient' in $F(z)\backslash F[z]$ appears in the
position in some term,  between two monomials obtained on the
$(n-1)$-th step. Let us describe this situation in more details.

\

\noindent Let
$$x\to \sum v_i,\quad y\to \sum u_i$$
be an
automorphism, obtained on the $(n-1)$-th step. Consider $n$-th
step:
$$x\to x,\ y\to y+\sum_i q_0^ixq_1^i\cdots xq_{n_i}^i.$$
Let
$v_i=a_i\bar{v}_ib_i$ where $a_i, b_i\in Q, \bar{v}_i$ begins with
either $x$ or $y$ and  also ends with either $x$ or $y$.

\

\noindent Suppose the leftmost factor $q\in F(z)\backslash F[z]$ corresponding
to the leftmost factor $q$ in monomial $s_i$ in $s$ appears in the
corresponding sandwich $w$.
Then it has the form
$$w=q_0^iv_{\alpha_1}q_1^i\cdots a_{\alpha_k}\bar{v}_{\alpha_k}b_{\alpha_k}q_k^i
a_{\alpha_{k+1}}\bar{v}_{\alpha_{k+1}}b_{\alpha_{k+1}}\cdots
a_{\alpha_{n_i}}\bar{v}_{\alpha_{n_i}}b_{\alpha_{n_i}}q_{n_i}^i$$
and the position $\bar{v}_{\alpha_k}b_{\alpha_k}q_k^i
a_{\alpha_{k+1}}\bar{v}_{\alpha_{k+1}}$ corresponds to the
position of fractional coefficient in the sandwich $s\cdot
\prod_{i=1}^{n-1}v_i$ living inside $s=s_1qs_2$, $s_1$ ends with
$x$ or $y$, $s_2$ begins with $x$ or $y$. Then
$$q=b_{\alpha_k}q_k^i a_{\alpha_{k+1}},
s_1= q_0^iv_{\alpha_1}q_1^i\cdots a_{\alpha_k}\bar{v}_{\alpha_k},
s_2T_{i_2}\cdots T_{i_n}=$$
$$=\bar{v}_{\alpha_{k+1}}b_{\alpha_{k+1}}\cdots
a_{\alpha_{n_i}}\bar{v}_{\alpha_{n_i}}b_{\alpha_{n_i}}q_{n_i}^i.
$$
Only in that case cancellation is possible. Here $T_i$ are
monomial summands of $T$.

\

\noindent Now let us compare the degrees. $\deg(s_1)<\deg(s)$,
$\sum_{i=k}^n\deg(v_i)\le (n_i-k+1)\deg(T)\le (m-1)\deg(T)$. Hence
$$\deg(W)<\deg(s)+(m-1)\deg(T)=\deg(\widetilde{G}(s,T,\dots,T))$$ so any
cancellation is impossible.
%
%
%
%
\end{proof}


\begin{lemma}[on coefficient improving]       \label{LeCoeffImpr}
\noindent a) Let $x'=pxq;\ p,q\in F(z)$, $M_{\vec{q}}(x) =xq_1xq_2\cdots x$,
$q_i'=q^{-1}q_ip^{-1}$. Then $M_{\vec{q}}(x')= pM_{\vec{q'}}(x)q$.

\noindent b) Take the process in Theorem \ref{degreeincrease} without
sandwiches. Then after each step (except the last step), the outcome $(f,g)$
has the following properties:

\begin{itemize}
    \item  Both $f$ and $g$ are sandwich-free.
    \item The left coefficients of $f$ and $g$  belong to $F[z]$. Moreover,
    the two coefficients are relatively prime.
    \item The right coefficients  of $f$ and $g$ belong to $F[z]$.  Moreover,
    the two coefficients are relatively prime.
\end{itemize}

\noindent Now we can clearly see that the outcome of the last step also has  the
above property.
\end{lemma}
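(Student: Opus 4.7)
This is a direct expansion. Writing $M_{\vec{q}}(x') = x' q_1 x' q_2 \cdots x'$ and substituting $x' = pxq$ collapses the product to
\[
M_{\vec{q}}(x') \;=\; p \cdot x \cdot (qq_1p) \cdot x \cdot (qq_2p) \cdots x \cdot q,
\]
which is $p\, M_{\vec{q}'}(x)\, q$ after reading off the interior $Q$-factors as the $q_i'$.

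\textbf{Part (b).} The plan is to proceed by induction on the step number $n$ in the canonical decomposition of Theorem \ref{degreeincrease}. The base case $(f_0,g_0) = (x,y)$ trivially has left and right coefficients equal to $1 \in F[z]$, and no sandwich is present.

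Assume that the pair $(f,g)$ obtained after step $n-1$ is sandwich-free, with left coefficients $a_f, a_g \in F[z]$ satisfying $\gcd(a_f,a_g) = 1$ and right coefficients $c_f, c_g \in F[z]$ satisfying $\gcd(c_f,c_g) = 1$. Factor $f = a_f \tilde f c_f$ and $g = a_g \tilde g c_g$, where $\tilde f, \tilde g$ have left and right coefficient $1$. Apply the $n$-th elementary operation, say $x \to x$, $y \to r g r' + \sum_i r_{0,i}\, x\, r_{1,i}\, x \cdots x\, r_{k_i+1,i}$ with $r, r', r_{j,i} \in F(z)$. Using part (a) with $(p,q) = (a_f, c_f)$ inside each sandwich summand, the resulting polynomial $g'$ displays leading $F(z)$-factors of the form $r_{0,i} a_f$ and $r a_g$, trailing factors $c_f r_{k_i+1,i}$ and $c_g r'$, and interior coefficients $c_f r_{j,i} a_f$ (inside the sandwich summands) together with the already-$F[z]$ interior coefficients inherited from $\tilde g$.

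By Lemma \ref{LeSandwich}, since the final outcome is sandwich-free, no intermediate outcome can contain a sandwich either; hence every interior product $c_f r_{j,i} a_f$ must already lie in $F[z]$. Extracting the largest common $F[z]$-left factor from the collection $\{r a_g\} \cup \{r_{0,i} a_f\}_i$ produces the new left coefficient $a_{g'} \in F[z]$ of $g'$, and a symmetric argument on the right yields $c_{g'} \in F[z]$. The coprimality $\gcd(a_f, a_{g'}) = 1$ (and likewise $\gcd(c_f, c_{g'}) = 1$) then follows from the canonical uniqueness of the decomposition: any nontrivial common $F[z]$-factor could be absorbed into a strictly smaller elementary step, contradicting the canonicity of Theorem \ref{degreeincrease}. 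The symmetric case (operation fixing $y$) is identical, and the final step is handled verbatim, yielding the concluding remark.

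The main obstacle is the coprimality step: one must argue that the canonical decomposition is genuinely minimal, so no nontrivial polynomial factor can silently sit inside the elementary step's $F(z)$-coefficients. Once that is pinned down, the proof is essentially part (a) applied repeatedly, with careful bookkeeping of $F[z]$-integrality enforced by the no-sandwich constraint inherited from Lemma \ref{LeSandwich}.
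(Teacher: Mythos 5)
Your part (a) computation is fine in substance (the expansion of $M_{\vec{q}}(pxq)$ is exactly as you write), though note it does not literally match the lemma's stated formula $q_i'=q^{-1}q_ip^{-1}$: what your expansion produces is $q_i'=qq_ip$, i.e.\ the identity as printed holds with $\vec q$ and $\vec q\,'$ interchanged (or for the substitution $x'=p^{-1}xq^{-1}$). You should notice and resolve that mismatch rather than silently redefining $q_i'$; the paper itself treats (a) as an obvious computation, so this is minor.

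The genuine gap is in part (b), and it occurs at exactly the two bullets that carry the content of the lemma. First, sandwich-freeness (Lemma \ref{LeSandwich} plus the hypothesis) only constrains coefficients lying strictly \emph{between} occurrences of $x,y$; it says nothing about the outermost left and right coefficients. For example $z^{-1}x$ is sandwich-free with left coefficient outside $F[z]$, so your step ``extracting the largest common $F[z]$-left factor from $\{ra_g\}\cup\{r_{0,i}a_f\}_i$ produces $a_{g'}\in F[z]$'' is unjustified: the products $ra_g$, $r_{0,i}a_f$ live in $F(z)$ and nothing you have said prevents a genuine denominator from surviving in the outer positions. Second, the coprimality argument via ``canonical uniqueness of the decomposition'' is not a real argument: the canonicity in Theorem \ref{degreeincrease} concerns the degree-increasing structure of the elementary operations, and a common polynomial factor in the left (or right) coefficients of $f$ and $g$ contradicts nothing there; there is no ``strictly smaller elementary step'' into which it would have to be absorbed. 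The intended content of the lemma --- as its name ``coefficient improving'' and the paper's one-line proof ``b) is a consequence of a)'' indicate --- is a \emph{normalization}: at each step one pulls the least common multiple of the denominators and divides out the gcd of the outer coefficients, transferring these scalar factors ($x\to pxq$-type adjustments) into the adjacent elementary operation, and part (a) is precisely what guarantees the adjusted operation is still elementary of the allowed form. With that construction, membership in $F[z]$ and relative primality hold \emph{by construction}, not as automatic properties of the unnormalized canonical process. Your induction should therefore be reorganized around this conjugation/normalization step; as written, the two key bullets are asserted rather than proved.
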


\begin{proof}  a) is obvious;\ b) is a consequence of  a).\end{proof}

\

\begin{lemma}    \label{Lefinish}
Let $f\in F(z)*_FF\langle x,y\rangle$,\ $P(u)\in F(z)*F[u]$ such
that each monomial has degree $\ge 2$ respect to $x$ and $y$.
Suppose that one of the coefficients of $P$ has zero right
$z$-degree and one of the coefficients of $f$ has zero right
$z$-degree, and there is no coefficients of $P$ and $f$ with
negative right $z$-degree. Then $P(f)$ has one of the coefficients
with zero right $z$-degree and the degree (respect to $x$ and $y$)
of corresponding term is strictly more then $\deg(f)$.
\end{lemma}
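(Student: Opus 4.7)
The plan is to exhibit an explicit monomial $T$ of $P(f)$ with zero right $z$-degree and $xy$-degree strictly exceeding $\deg(f)$, and then to argue non-cancellation. By hypothesis, $P$ admits a monomial $M_P = q_0 u q_1 u \cdots u q_k$ some of whose coefficients $q_j$ has zero right $z$-degree (so $q_j\in F$), and $f$ admits a monomial $M_f = r_0 v_1 r_1 \cdots v_n r_n$ with $v_i\in\{x,y\}$ some of whose coefficients $r_l$ has zero right $z$-degree. The assumption that every monomial has degree at least two with respect to $x$ and $y$ forces $k\ge 2$ and $n\ge 2$.

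I would form the candidate term
\[
T \;=\; q_0\, f^+\, q_1\, f^+\cdots q_{j-1}\, M_f\, q_j\, f^+\cdots f^+\, q_k,
\]
where $f^+$ denotes the maximum-$xy$-degree part of $f$, substituted into every slot of $M_P$ except the $j$-th, into which we place $M_f$. The $xy$-degree of $T$ equals $(k-1)\deg(f) + n \ge \deg(f) + 2 > \deg(f)$. Because right $z$-degree is additive under concatenation and all coefficients in sight are non-negative, an interior occurrence of $r_l$ inside the substituted $M_f$ survives unchanged in $T$ and supplies a coefficient of zero right $z$-degree; endpoint occurrences (when $l=0$ or $l=n$) can be handled by aligning $j$ with an adjacent zero-right-$z$-degree coefficient of $M_P$, the combined coefficient $r_l q_{j-1}$ or $r_n q_j$ then still having right $z$-degree $0+0=0$. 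Either way, $T$ contains a coefficient of zero right $z$-degree at the prescribed position.

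The main obstacle is verifying that the contribution of $T$ to $P(f)$ is not cancelled by contributions from other monomials of $P$ or other substitution patterns. I would handle this using the normal form on the coproduct $F(z)*_F F\langle x,y\rangle$: every element has a unique expansion in the basis of words $s_0 w_1 s_1\cdots w_m s_m$ with $w_i\in\{x,y\}$ and $s_i\in F(z)$, so distinct substitution patterns into distinct monomials of $P$ yield distinct basis elements that cannot cancel. Tracking the distinguishing features of $T$---specifically, its $(k-1)$ copies of $f^+$ in the leading slots, the single embedded copy of $M_f$, and the location of the zero-right-$z$-degree coefficient---one shows that $T$ is uniquely produced by the pair $(M_P,M_f)$ with this specific substitution pattern, in the same spirit as the sandwich-preservation argument of Lemma~\ref{LeSandwich}. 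The right $z$-degree filtration then guarantees that the distinguished zero-right-$z$-degree coefficient of $T$ cannot be annihilated by any positive-right-$z$-degree contribution, completing the argument.
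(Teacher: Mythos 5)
There is a genuine gap, in two places. First, ``right $z$-degree'' here is positional: in the way Corollary \ref{Cofinish} and Lemmas \ref{Lm2new}--\ref{Leverylast} use this lemma it refers to the power of $z$ standing at the right end of a term, and for a composed monomial of $P(f)$ this is controlled only by the rightmost coefficient of the $P$-monomial together with the term of $f$ substituted into the rightmost slot. Your construction puts the distinguished monomial $M_f$ into an arbitrary slot $j$ and lets the witnessing coefficient sit in an interior (or merged) position, while the slots to its right receive $f^+$, whose rightmost coefficient may well have positive $z$-degree; the resulting term then need not have zero right $z$-degree in the sense the paper needs. The paper's proof instead substitutes the sum $\widetilde f$ of the highest-degree zero-right-$z$-degree terms of $f$ into the rightmost slot, and the highest-degree part $h$ of $f$ into the remaining slots, of the zero-right-$z$-degree part $\widetilde P$ of $P$.

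Second, your non-cancellation step rests on the claim that distinct substitution patterns into distinct monomials of $P$ yield distinct basis words. That is false: at slot boundaries coefficients multiply, so different patterns can produce the same word (for instance, if $f$ contains both $xz$ and $zx$, substituting them into $u\cdot u$ in either order yields the word $xzx$, and such contributions can cancel). Cancellation across patterns is exactly the difficulty, and the paper controls it by a two-step filtration: since no right $z$-degree is negative, only $P$-monomials of zero right $z$-degree with a zero-right-$z$-degree term of $f$ in the rightmost slot can contribute to the zero-right-$z$-degree part of $P(f)$; and within that class the top component with respect to $x,y$-degree is $\widetilde P(h,\dots,h,\widetilde f)$, which is nonzero by the same $H(S,T,\dots,T)\neq 0$ argument as in Lemma \ref{LeSandwich}, so degree-maximality excludes cancellation. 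Choosing a single monomial $M_f$ rather than the whole sum $\widetilde f$, and an interior slot rather than the rightmost one, gives you no such filtration, and ``tracking distinguishing features'' of one word does not by itself exclude contributions from other monomials and patterns. (A minor further slip: the degree $\ge 2$ hypothesis concerns $P$ only, so it does not force $n\ge 2$ for $M_f$; this is harmless for the degree bound but the stated reasoning is off.)
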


\begin{proof} Consider the highest degree monomials of $P$ and $f$ with zero
right $z$-degree, let $\widetilde{P},\widetilde{f}$ will be their
sums. Let $g$ be sum of terms of $f$ with zero right $z$-degree,
$h$ be the sum of terms of $f$ of maximal degree.

\

\noindent Now consider again the highest degree monomials in $P(u)$ with zero
right $z$-degree and substitute $\widetilde{f}$ on the rightmost
position instead of $u$ and $h$ on other positions of $u$. We
shall get some terms with non-zero sum $T$ (same argument as in
the proof of sandwich lemma). All such terms have zero right
$z$-degree.

\

\noindent It remains to prove that such terms cannot cancel out from
the other
terms. First of all, we need to consider only terms of $P$ with
zero right $z$-degree, other terms can not make any influence.
Second, we have to consider substitutions only of terms with zero
right $z$-degree on the rightmost positions of $u$. Let $V$ be
their sum.

\

\noindent But the sum of highest terms satisfying this conditions is equal to $T$
and $T$ is the highest homogeneous component of $V$, hence $V\ne
0$.
\end{proof}

\begin{corollary}     \label{Cofinish}
Let $f$ be a polynomial, $P\in F(z)*F[x]$ such that each monomial
has degree $\ge 2$. Suppose that  one of the coefficients of
$P(f)$ has (zero)negative right $z$-degree. Then $P(f)$ has one of
the coefficients with (zero) negative right $z$-degree and degree
of corresponding term is strictly more then $\deg(f)$.
\end{corollary}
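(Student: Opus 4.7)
The plan is to parallel the proof of Lemma \ref{Lefinish}, with ``zero right $z$-degree'' replaced throughout by ``smallest right $z$-degree.'' Write $-a$ and $-b$ for the smallest right $z$-degrees occurring among the coefficients of $P$ and of $f$ respectively. The smallest right $z$-degree that can appear in the formal expansion of $P(f)$ is $-(a+b)$, and this value is realised only by substitutions that place an $f$-monomial of right $z$-degree $-b$ in the rightmost $x$-slot of a $P$-monomial whose right coefficient has $z$-degree $-a$. The hypothesis that some coefficient of $P(f)$ has right $z$-degree $\le 0$ forces $-(a+b)\le 0$, so it suffices to show that this smallest right $z$-degree is actually attained in $P(f)$ by a term of large $(x,y)$-degree.

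Following the structure of Lemma \ref{Lefinish}, let $\widetilde P$ be the sum of those $P$-monomials of top $x$-degree among those with right coefficient of $z$-degree $-a$, let $\widetilde f$ be the sum of those $f$-monomials of top $(x,y)$-degree among those with right $z$-degree $-b$, and let $h$ be the top $(x,y)$-homogeneous component of $f$. Form $T$ by substituting $\widetilde f$ in the rightmost $x$-slot of $\widetilde P$ and $h$ in each of the remaining slots. Every monomial of $T$ then has right $z$-degree exactly $-(a+b)$ and a common $(x,y)$-degree; because each monomial of $P$ has $x$-degree $\ge 2$, at least one slot other than the rightmost is filled with $h$, and a direct degree count shows that this common $(x,y)$-degree strictly exceeds $\deg(f)$.

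The key step, and the main obstacle, is to show that $T\ne 0$ and that no other contribution to $P(f)$ cancels the terms of $T$. Non-vanishing of $T$ follows from the same non-degeneracy identity used in the proofs of Lemmas \ref{LeSandwich} and \ref{Lefinish}: for any expression $H(u_1,\dots,u_m)=\sum q_{i,0}u_1 q_{i,1}\cdots u_m q_{i,m+1}$ with $H(x,\dots,x)\ne 0$ and any non-zero $S_1,\dots,S_m$, the substitution $H(S_1,\dots,S_m)$ is non-zero. Cancellation is ruled out by a degree comparison: any other substitution producing a monomial of right $z$-degree $-(a+b)$ must also place a minimum-right-$z$-degree monomial of $f$ in the rightmost slot, and if in addition the resulting monomial has $(x,y)$-degree matching that of $T$, then every other slot must be filled with a top-$(x,y)$-degree monomial of $f$; such contributions are precisely those already collected in $T$. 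This yields a coefficient of $P(f)$ of non-positive right $z$-degree whose corresponding $(x,y)$-term has degree strictly greater than $\deg(f)$, as required.
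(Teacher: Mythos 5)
Your argument is essentially the paper's: the corollary is stated there as an immediate consequence of Lemma \ref{Lefinish}, and you simply re-run that lemma's leading-term/non-cancellation proof with ``zero right $z$-degree'' replaced by ``minimal right $z$-degree'', which is exactly the intended deduction. One caveat: the non-degeneracy identity you invoke holds only for substituted elements lying outside $Q=F(z)$ (as in the paper's own formulation), so your step ``any non-zero $S_1,\dots,S_m$'' and the strict degree count tacitly assume the minimal-right-$z$-degree part of $f$ is non-constant in $x,y$ --- the same tacit assumption the paper itself makes.
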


\noindent There is just the `dual' left version of lemma \ref{Lefinish} and corollary
\ref{Cofinish}.

\

\noindent As a consequence of the above corollary, we get

\begin{lemma}\label{Lm2new}
In the  step $x\to x$, ($z\to z$
because we are working with $z$-automorphisms) $y\to y+x^kz^{-l}$,\
$k>1$ of the degree-strictly-increasing process, applied to the automorphism $x\to x+\mbox{higest\ terms}, y\to
y+\mbox{higest\ terms}$ causes some negative
power(s).
\end{lemma}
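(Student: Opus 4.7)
The plan is a direct application of Corollary \ref{Cofinish}, combined with a short degree comparison to rule out cancellation. Denote the current pair just before the step by $(f,g)$, with $f = x + (\text{higher terms})$ and $g = y + (\text{higher terms})$, and write the elementary step as $y \to y + P(x)$ where $P(x) = z^{-l}x^{k} + \cdots$ with $k > 1$ and $l > 0$. Applying the step to $(f,g)$ produces the new second coordinate $g + P(f)$, and the claim is that this element carries a strictly negative power of $z$ in at least one of its coefficients.

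First I would check that the hypotheses of Corollary \ref{Cofinish} are satisfied in this situation. Since $k > 1$, every monomial of $P$ has $(x,y)$-degree $\geq 2$; the leading coefficient $z^{-l}$ has negative right $z$-degree; and by Lemma \ref{LeCoeffImpr}(b), applied at the stage immediately preceding this step (at which no sandwich has yet appeared), both left and right coefficients of $f$ lie in $F[z]$ and, being relatively prime, cannot all be divisible by $z$. In particular, no coefficient of $f$ has negative right $z$-degree, and at least one has zero right $z$-degree, matching the setup of Lemma \ref{Lefinish} and its corollary.

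Corollary \ref{Cofinish} then provides a monomial in $P(f)$ whose coefficient has negative right $z$-degree and whose $(x,y)$-degree is strictly larger than $\deg(f)$. The only remaining issue is whether some term of $g$ could cancel this offending monomial. Here the structure of the degree-increasing process enters: an elementary step of the form $y \to y + P(x)$ only appears in the canonical sequence when $\deg(f) \geq \deg(g)$ (otherwise the step would fail to be degree-reducing for the inverse algorithm). Hence the surviving monomial of $P(f)$ has $(x,y)$-degree strictly greater than $\deg(g)$, and no term of $g$ can cancel it; the coefficient with negative right $z$-degree persists in $g + P(f)$.

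The main work is hidden in Corollary \ref{Cofinish} (and behind it, Lemma \ref{Lefinish}), which forces the negative $z$-power to be witnessed on a term of large $(x,y)$-degree; once that is granted, the rest of the argument is the short degree comparison supplied by Lemma \ref{LeCoeffImpr} and the standing assumption that the step is the next one in the canonical degree-increasing sequence.
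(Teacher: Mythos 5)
Your argument follows exactly the route the paper intends: the paper gives no proof of Lemma \ref{Lm2new} beyond declaring it ``a consequence of the above corollary,'' and your write-up is precisely that derivation from Corollary \ref{Cofinish} (via Lemma \ref{Lefinish}), with the hypothesis-checking through Lemma \ref{LeCoeffImpr} and the degree comparison against $g$ filled in. The only cosmetic slip is writing the monomial as $z^{-l}x^{k}$ while then correctly treating $z^{-l}$ as a right coefficient, in line with the lemma's $x^{k}z^{-l}$; for a left-sided coefficient one would simply invoke the stated left dual of Lemma \ref{Lefinish} and Corollary \ref{Cofinish}.
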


\medskip

\noindent In order to prove Theorem \ref{lifting} we need a similar
statement which is also a consequence of the Corollary
\ref{Cofinish}.

\begin{lemma}\label{Lm2newLifting}
In the  step $x\to x$, ($z\to z$ because we are working with
$z$-automorphisms) $y\to y+P(x)$,\  such that $P$ has  negative
powers of $z$ as left coefficients of some monomial of degree $\ge
2$ in the degree-strictly-increasing process causes some negative
power(s) on any succeeding step.
\end{lemma}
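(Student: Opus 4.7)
The plan is to argue by induction on the number of succeeding steps, the inductive step being a direct application of the dual (left) version of Corollary \ref{Cofinish}. After the step $y\to y+P(x)$ is performed, the resulting $y$-component contains a monomial of $(x,y)$-degree $\ge 2$ whose left coefficient is of the form $cz^{-l}$ with $l>0$; call any such monomial \emph{bad}. My goal is to show that after every subsequent elementary step of the degree-strictly-increasing process from Theorem \ref{degreeincrease}, the current automorphism still possesses a bad monomial, of $(x,y)$-degree strictly larger than at the previous stage.

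For the inductive step, suppose the intermediate automorphism contains a bad monomial (say in its current $y$-component). The next elementary operation has one of the two forms listed in Theorem \ref{degreeincrease}, so without loss of generality it is $x\to qxq'+\sum q_0 y q_1 y\cdots y q_{k+1}$, $y\to y$ (the other case is symmetric under swapping $x$ and $y$). The nonlinear summand is a polynomial in the current $y$ with every monomial of $(x,y)$-degree $\ge 2$. Applying the dual left version of Corollary \ref{Cofinish} to this substitution yields in the new $x$-component a monomial whose left coefficient has negative $z$-degree and whose $(x,y)$-degree is strictly larger than before, completing the induction.

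The main obstacle is ruling out cancellation: the newly-produced top-degree bad monomials must not be killed by contributions from the linear part $qxq'$ or by preexisting terms. This is handled exactly as in the proof of Lemma \ref{Lefinish} and the sandwich-preserving Lemma \ref{LeSandwich}. One isolates the maximum-$(x,y)$-degree monomials of the current $y$ whose left coefficients have strictly negative $z$-degree and those whose left coefficients have non-negative $z$-degree, forms their respective sums $\widetilde{g}_{\mathrm{neg}}$ and $\widetilde{g}_{\mathrm{top}}$, and observes that substituting $\widetilde{g}_{\mathrm{neg}}$ in one position of the word $q_0 u q_1 u\cdots u q_{k+1}$ and $\widetilde{g}_{\mathrm{top}}$ in the remaining positions produces a nonzero homogeneous contribution with top total $(x,y)$-degree and negative left $z$-degree. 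No alternative substitution can cancel it, since any other choice yields strictly smaller $(x,y)$-degree or a non-negative left $z$-degree coefficient. Hence a bad monomial survives into the next step, with strictly larger $(x,y)$-degree, and the induction proceeds.
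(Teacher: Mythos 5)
Your overall strategy -- propagate the negative powers through the strictly increasing process by iterating the left ("dual") version of Corollary \ref{Cofinish} -- is exactly what the paper intends (its entire proof is the remark that the lemma is a consequence of that corollary), but your inductive step has two genuine gaps. First, the WLOG is false: swapping $x$ and $y$ identifies the case (bad monomial in $y$, step modifying $x$) with (bad monomial in $x$, step modifying $y$), but it does \emph{not} reduce to your treated case the situation where the elementary step modifies the very component already carrying the bad monomial, e.g. bad monomial in $y$ and step $y\to y+G(x)$. That untreated case is the heart of the lemma: there $G(f)$ is added to the component containing the negative powers and could in principle cancel them, so one must argue separately (either $G$ and $f$ carry no negative powers, hence no cancellation, or by Corollary \ref{Cofinish} and strict degree growth the new negative-power terms sit at top degree, above everything that could cancel them). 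Your cancellation paragraph only protects the \emph{newly produced} bad terms, not the preexisting ones, so the induction hypothesis is not re-established in this case.

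Second, the invariant you claim to maintain (a degree $\ge 2$ monomial with negative left $z$-degree coefficient, of strictly growing degree) is not what your substitution produces. If $\widetilde g_{\mathrm{neg}}$ is placed in any position of the word $q_0uq_1u\cdots uq_{k+1}$ other than the leftmost, its negative left coefficient ends up \emph{between} two variables, i.e. you obtain a sandwich rather than a negative left coefficient; moreover the adjacent word coefficient $q_j\in F(z)$ and the right coefficient of the neighbouring substituted monomial multiply against it, so negativity survives only after the valuation bookkeeping of Lemma \ref{LeFinishSandwich} (minimal right and left $z$-degrees, so that the total stays negative). Even in the leftmost position the new left coefficient is $q_0$ times the old one, and $q_0$ is an arbitrary element of $F(z)$ unless you first invoke the normalization of Lemma \ref{LeCoeffImpr}. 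The correct conclusion of the step is the weaker statement that \emph{some} negative power of $z$ persists, either as an outer coefficient or inside a sandwich, and carrying that forward requires Lemma \ref{LeSandwich} and Lemma \ref{LeFinishSandwich} alongside the left dual of Corollary \ref{Cofinish} -- which is exactly how the paper combines these ingredients in the proof of Theorem \ref{main}. As written, your inductive step does not reproduce the invariant it assumes.
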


\noindent Lemma \ref{Lefinish}, Lemma \ref{Lm2newLifting} and Corollary \ref{Cofinish} says
that any further step of non-linear operation either  contains terms
of negative power with bigger degree, or does not interfere in the
process. Hence they  imply the following


\begin{lemma}   \label{LeLinindNonCancell}
\noindent a) Consider stage in strictly increasing process of following
form.
$$
x\to T_1+h_1,\quad y\to T_2+h_2
$$
where $T_i$ are sums of the terms with negative powers of $z$ to
the right, $h_i$ -- are sums of the terms without negative powers
of $z$ to the right.

\noindent If $T_i$ are $F(z)$-linear independent, then the negative powers can
not be cancelled in the strictly increasing process.

\

\noindent b) Suppose $T_1$ is the sum of the terms with negative powers of
$z$ to the right, $h_1$ -- is the sum of the terms without
negative powers of $z$ to the right, $T_2$ is the sum of the terms
with zero powers of $z$ to the right, $h_2$ is the sum of the
terms with positive powers of $z$ to the right.

\noindent If $T_i$ are $F(z)$-linear independent, then the
negative powers can not be cancelled in the strictly increasing
process.
\end{lemma}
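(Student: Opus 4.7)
The plan is to prove both parts by induction on the remaining number of steps in the strictly increasing process, exhibiting at each stage a nonzero monomial with negative right $z$-degree whose total $x,y$-degree is strictly greater than at the previous stage; since degrees only go up, no such monomial can be cancelled at a later step.

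For a single inductive step, by Theorem \ref{degreeincrease} the next elementary operation has one of the two forms listed there, and by symmetry I may assume it is $x\to x$, $y\to y+R(x,y)$ where $R$ is an $F(z)$-linear combination of words in $x,y$ of total $x,y$-degree at least $2$. I would then substitute $x=T_1+h_1$ and $y=T_2+h_2$ into $R$ and collect the contributions to the output that have negative right $z$-degree. Applying Corollary \ref{Cofinish} to this substitution (equivalently, the quantitative form of Lemma \ref{Lefinish}), the maximal-$x,y$-degree part of this ``negative tail'' lives in $x,y$-degree strictly larger than the maximal $x,y$-degree of any negative-right-$z$-degree monomial present at the previous stage.

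The main obstacle, which the linear-independence hypothesis is designed to overcome, is ruling out cancellation inside this surviving sum. Here the argument runs in close parallel to the proof of Lemma \ref{LeSandwich}: the top-degree negative-right-$z$-degree contributions form an expression obtained from a multilinear template $H(x_1,\dots,x_m)$ with $H(x,\dots,x)\neq 0$ by substituting $T_1,T_2$ (and possibly $h_1,h_2$) in the various slots; the same nonvanishing principle invoked there, namely that $H(S,T,\dots,T)\neq 0$ for any nonzero $S,T\notin Q$, then forces the surviving sum to be nonzero exactly when $T_1$ and $T_2$ are $F(z)$-linearly independent. Lemma \ref{Lm2newLifting} takes care of the auxiliary concern that the current step itself might introduce new negative powers of $z$ on the right: once created, these can only persist into, and grow in degree along, all succeeding steps, and so they cannot interfere with the tail we are tracking.

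Part (b) is proved by the same template, with ``negative right $z$-degree'' on the $x$-component and ``zero right $z$-degree'' on the $y$-component tracked in parallel; Lemma \ref{Lefinish} itself, as opposed to Corollary \ref{Cofinish}, supplies the strict-degree-increase statement for the zero-right-$z$-degree tail, while the negative tail on the $x$-side is handled as in part (a). Once the inductive step is established, the structural hypotheses on the $T_i$ and $h_i$ reproduce themselves (with updated data) at the next stage, and the induction closes.
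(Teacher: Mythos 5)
There is a genuine gap, and it sits exactly where the hypothesis of the lemma does its work. Your inductive step treats only elementary operations that add words of total $x,y$-degree at least $2$, but the operations of Theorem \ref{degreeincrease} have the form $y\to ryr'+\sum r_0xr_1x\cdots r_kxr_{k+1}$ (and its mirror), which include the coefficient action on $y$ and, for $k=0$, genuinely linear terms $r_0xr_1$. These degree-one actions are not covered by the degree-increase mechanism of Lemma \ref{Lefinish}/Corollary \ref{Cofinish}: they do not push anything into higher degree, and they are precisely the steps that could in principle cancel the negative right powers of $z$, since they replace the current images by two-sided $F(z)$-linear combinations of $T_1+h_1$ and $T_2+h_2$. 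The intended argument (spelled out in the paper's proof of the closely analogous Lemma \ref{Leverylast}) splits into two cases: the nonlinear contributions, where the negative powers reappear in strictly larger degree and hence cannot be cancelled, and the linear contributions, which ``only produce the $F(z)$-linear combinations'' and are harmless exactly because $T_1,T_2$ are $F(z)$-linearly independent. Your proposal never confronts the linear case at all, so the hypothesis is not actually used where it is needed.

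A related misplacement: you invoke the linear independence of $T_1,T_2$ to guarantee nonvanishing of the top-degree negative-right-$z$-degree sum produced by a nonlinear substitution, but the nonvanishing principle borrowed from the proof of Lemma \ref{LeSandwich} ($H(S,T,\dots,T)\neq 0$) requires only that the substituted elements be nonzero and not lie in $Q$; independence is not the operative condition there. Finally, to close your induction you must verify that the $F(z)$-linear independence of the negative-right-degree tails (and, in part (b), the zero-right-degree tail) is preserved after each step -- the paper explicitly notes ``the $F(z)$-independence preserves'' -- whereas you only assert that the structural hypotheses reproduce themselves without argument. Repair the proof by adding the linear-step case (using the independence hypothesis there) and a short check that independence propagates; the nonlinear case can then stay essentially as you wrote it.
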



\noindent In order to prove Theorem \ref{lifting} we need slight
generalization of the previous lemma, which also follows from the
Lemma \ref{Lefinish} and Corollary \ref{Cofinish}.

\begin{proposition}    \label{PrLinindNonCancellGen}
Consider stage in strictly increasing process of following form:
$$
x\to T_1+h_1+g_1,\quad y\to T_2+h_2+g_2
$$
where $T_i$ are sums of the terms with negative powers of $z$ to
the right, $h_i$ -- are sums of the terms with zero powers of $z$
to the right, $g_i$ are sums of the terms with positive powers of
$z$ to the right.

\

\noindent If $T_i$ are $F(z)$-linear independent, or wedge product of
vectors
$$(T_1,T_2)\bigwedge_{F[z_l,z_r]} (h_1,h_2)\ne 0,$$
then the negative powers can not be cancelled in the strictly
increasing process. Wedge product is taken respect to left and
right $F(z)$-actions, i.e. as $F[z_l,z_r]$-modula, monomial
(respect to $x, y$, and inner positions of $z$) are considered as
basis vectors.
\end{proposition}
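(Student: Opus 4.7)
The plan is to extend the argument of Lemma~\ref{LeLinindNonCancell} by tracking how the negative-right-$z$-degree part propagates under successive substitutions, and by using an $F[z_l,z_r]$-module interpretation of the wedge condition to rule out the extra cancellations that can occur when $T_1,T_2$ become $F(z)$-dependent. By Theorem~\ref{degreeincrease}, every succeeding non-linear step has the form $x\to x,\ y\to y+P(x)$ or its mirror, so I would reduce to the first form and apply the substitution $x\mapsto T_1+h_1+g_1$.

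First I would expand $P(T_1+h_1+g_1)$ by multilinearity of its sandwich monomials, grouping the resulting terms by the pattern recording whether each factor is drawn from $T_1$, $h_1$, or $g_1$. Applying Corollary~\ref{Cofinish} to each pattern whose rightmost factor comes from $T_1$, the resulting contributions carry negative right $z$-degree and have total $(x,y)$-degree strictly greater than $\deg(T_1)$; by the non-vanishing argument for $H(S,T,\dots,T)$ in the proof of Lemma~\ref{LeSandwich}, the leading homogeneous piece of each pattern is nonzero.

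The next step is to show that these new contributions can neither cancel amongst themselves, nor with residual negative-right-$z$-degree terms already present in $T_2$, nor with analogous contributions produced in future steps. In Case~1 ($T_1,T_2$ are $F(z)$-linearly independent) the argument of Lemma~\ref{LeLinindNonCancell}(a) passes through unchanged, since any cancellation would manifest as an $F(z)$-linear dependence between $T_1$ and $T_2$. In Case~2 I would view the underlying monomials of the $T_i$ and $h_i$ (with internal $z$-insertions stripped) as basis vectors of a free $F[z_l,z_r]$-module $V$, where $z_l,z_r$ act by insertion of $z$ on the immediate left and right. The hypothesis $(T_1,T_2)\wedge_{F[z_l,z_r]}(h_1,h_2)\ne 0$ says precisely that $(T_1,T_2)\in V\oplus V$ is not an $F[z_l,z_r]$-multiple of $(h_1,h_2)$; any prospective cancellation between a mixed substitution that uses some $T_i$'s in inner positions and a substitution that uses only $h_j$'s would force such a proportionality, contradicting the hypothesis.

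The hard part will be the bookkeeping in Case~2. Because $z$ may be inserted on either side of a monomial in $x,y$, the correct independence notion is over the two-variable polynomial ring $F[z_l,z_r]$ rather than over $F(z)$, and I would need to verify that the wedge hypothesis survives every subsequent elementary substitution. I would handle this by induction on the step: at each stage the leading negative-right-$z$-degree pair and the leading zero-right-$z$-degree pair form a rank-$2$ element of $V\oplus V$ over $F[z_l,z_r]$, and the non-vanishing argument of the Sandwich Lemma iterated via Corollary~\ref{Cofinish} preserves this rank-$2$ property. Once the induction is set up, a nonzero leading negative-right-$z$-degree term at any given stage can only be cancelled by a term whose negative-right-$z$-degree component lies in the same $F[z_l,z_r]$-line, which the wedge condition excludes, and the proposition follows.
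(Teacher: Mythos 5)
Your overall skeleton does match the paper's (very terse) argument: the paper disposes of the proposition by noting that Lemma \ref{Lefinish} and Corollary \ref{Cofinish} reduce everything to linear elementary operations, and that under the hypothesis a linear operation cannot kill the negative powers. Your second paragraph (expanding $P$ over the decomposition $T_1+h_1+g_1$ and invoking Corollary \ref{Cofinish} plus the non-vanishing argument from Lemma \ref{LeSandwich}) reproduces the first half of this correctly. The gap is in how you spend the wedge hypothesis. In your Case~2 you use it to rule out cancellations among \emph{non-linear} substitution patterns (terms using some $T_i$ in inner positions versus terms using only $h_j$'s); but those are already controlled by the degree argument and do not need the wedge condition. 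The scenario the wedge condition exists for is a \emph{degree-one} step, say $x\to x$, $y\to ryr'+sxs'$ with $r',s'\in F(z)$ of non-zero valuation: right multiplication can push the zero-right-$z$-degree parts $h_i$ into negative right degree, so the negative part of the new image is an $F[z_l,z_r]$-combination of $T_1,T_2,h_1,h_2$, and only the hypothesis ($F(z)$-independence of $T_1,T_2$, or $(T_1,T_2)\wedge_{F[z_l,z_r]}(h_1,h_2)\ne 0$) prevents it from vanishing. You never analyse this case explicitly; it is buried in the final, unproved assertion that the ``rank-$2$ property'' is preserved at every stage, which is exactly the statement that would need proof for your induction to close.

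There is also a misreading of the hypothesis itself. Over the domain $F[z_l,z_r]$, non-vanishing of $(T_1,T_2)\wedge(h_1,h_2)$ means the two vectors are linearly independent, i.e.\ there is no relation $a\cdot(T_1,T_2)=b\cdot(h_1,h_2)$ with $a,b\in F[z_l,z_r]$ not both zero; this is strictly stronger than your reading ``$(T_1,T_2)$ is not an $F[z_l,z_r]$-multiple of $(h_1,h_2)$'' (compare $z_l v$ and $z_r v$: neither is a multiple of the other, yet their wedge vanishes). With the weaker reading, Case~2 would not even exclude the cancellations you do consider, so both the interpretation and the placement of the wedge condition need to be repaired before the argument goes through.
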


\begin{proof} Pbviously, any linear operation cannot cancel the negative powers of
$z$, but Lemma \ref{Lefinish} and corollary \ref{Cofinish} allows us
to consider only such operations.
\end{proof}

\begin{remark}         \label{RemValuations}
Considering the substitutions $z\to z+c$ one can get similar results
for negative powers of $z+c$ (or via considering other valuations
of $F(z)$).
\end{remark}

\begin{lemma}    \label{Leverylast}
Consider the step in the strictly increasing process of following form.
$$
x\to T+h_1',\quad y\to U+h_2'
$$
where $T$ is  the sum of the terms with negative powers of $z$ to
the right, $U$ the sum of the terms with negative powers of $z$ to
the right,  $h_1$ is the sum of the terms without negative powers
of $z$ to the right, $h_2$ is the sum of the terms with positive
powers of $z$ to the right.

\

\noindent If $T$ and $U$ are $F(z)$-linear independent, then the negative
powers cannot be cancelled  in the strictly increasing process.
\end{lemma}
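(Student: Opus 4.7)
The plan is to adapt the argument of Proposition \ref{PrLinindNonCancellGen} to the symmetric situation in which both $x$ and $y$ carry rightmost-negative-$z$-power parts, namely $T$ and $U$ respectively. The guiding dichotomy, inherited from Lemma \ref{Lefinish}, Lemma \ref{Lm2newLifting} and Corollary \ref{Cofinish}, is that at every subsequent step of the strictly-increasing process the new elementary operation is either $F(z)$-linear in $x,y$ (in which case it acts on the rightmost-negative-$z$-power stratum by an honest $F(z)$-linear map) or non-linear (in which case it produces strictly higher $x,y$-degree terms that cannot touch the existing $T$ and $U$).

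First I would dispose of the non-linear case. For a non-linear elementary step $x\mapsto x$, $y\mapsto y+P(x,y)$ with $P$ of $x,y$-degree $\ge 2$ (or its symmetric swap), substituting the current $x=T+h_1'$ and $y=U+h_2'$ yields new monomials; the ones whose rightmost letter originates from $T$ or $U$ lie in strictly higher $x,y$-degree than $T$ and $U$, and the degree-comparison argument from the proof of Lemma \ref{LeSandwich} rules out any cancellation between those monomials and monomials whose rightmost letter originates from $h_1'$ or $h_2'$ (which carry non-negative right $z$-degree). Consequently the original $T$ and $U$ survive, and the step merely enriches the negative-$z$-power content at strictly higher $x,y$-degree, to which the same analysis can be re-applied.

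Second, an $F(z)$-linear elementary step acts on the rightmost-negative-$z$-power part of $(x,y)$ by an $F(z)$-bilinear map on the pair $(T,U)$; concretely, it sends $(T,U)$ to a pair whose entries are $F(z)$-linear combinations of $T$ and $U$. The hypothesis that $T$ and $U$ are $F(z)$-linearly independent then guarantees that this map is injective on the rightmost-negative-$z$-power stratum, so the negative powers cannot be annihilated. I expect the main obstacle to be the first step, namely the rigorous verification that no cross-cancellation occurs under a non-linear substitution between monomials built from $T,U$ and monomials built from $h_1',h_2'$; this however is a direct degree comparison in the spirit of the proofs of Lemmas \ref{LeSandwich} and \ref{Lefinish}, and should carry over with only bookkeeping adjustments to accommodate that the rightmost negative-$z$-power now sits in either the $x$-slot or the $y$-slot.
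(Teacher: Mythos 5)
Your proposal is correct and follows essentially the same route as the paper's (very terse) proof: induct on the steps, use the degree-comparison arguments of Lemmas \ref{LeSandwich}/\ref{Lefinish} to show that non-linear (degree $\ge 2$) substitutions only push negative-power content into strictly higher degree and cannot cancel $T$ and $U$, while linear steps act by $F(z)$-linear combinations on $(T,U)$, where linear independence preserves the negative powers. You simply spell out in more detail what the paper compresses into two sentences.
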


\begin{proof} By induction. Input of composition with polynomials
with $x$-degree $\ge 2$  cannot be cancelled (otherwise some negative power
appears in the highest terms, and the $F(z)$-independence preserves).
But the $x$-linear term action  only produces the $F(z)$-linear combinations.
\end{proof}

\noindent Consider, for instance, the elementary automorphism $$x\to x,\quad y\to
y+z^nx^k.$$ It can be lifted to an $Q$-automorphism $$x\to x,\quad
y\to y+z^{n_0}x^{k_1}z^{n_1}\cdots x^{k_s}z^{n_s},\quad \sum
k_i=k,\ \sum n_i=n.$$ Though $n<0$,  $n_0$ and $n_s$ can still be
non-negative. It is necessary to deal with that kind of situation by the next lemma.

\begin{lemma}\label{LeFinishSandwich}
Consider a elementary mapping
$$x\to x;\quad y\to y+P(x)$$
such that $P(x)$ has a monomial of the following form:
$$z^{k_1}xz^{k_2}x\cdots xz^{k_s}$$
where one of $k_i<0$ for some $i$ such that $1<i<s$. Then if such an
elementary transformation occurs in the strictly increasing process, it must produce some
sandwich.
\end{lemma}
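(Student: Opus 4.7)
The plan is to produce, directly from the offending monomial of $P$, a visible sandwich in the output of the step, and then to rule out any cancellation with other contributions. By Lemma \ref{LeCoeffImpr}, just before this step both $f$ and $g$ are sandwich-free and their left and right leading coefficients lie in $F[z]$; in particular, every monomial of the top-degree homogeneous part $f^{+}$ begins and ends with a letter from $\{x,y\}$ and carries coefficients in $F[z]$ (no negative powers of $z$).

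I would first substitute $x=f$ into the offending monomial
\[
M \;=\; z^{k_{1}}\,x\,z^{k_{2}}\,x\cdots x\,z^{k_{s}}
\]
of $P(x)$. Fixing an index $i$ with $1<i<s$ and $k_{i}<0$, the expansion $M(f)=z^{k_{1}}fz^{k_{2}}f\cdots fz^{k_{s}}$ contains, after selecting the top-degree piece of each copy of $f$, a fragment of the form $\cdots q\,z^{k_{i}}\,p\cdots$, where $q\in\{x,y\}$ comes from the tail of the $(i-1)$-th copy of $f^{+}$ and $p\in\{x,y\}$ comes from the head of the $i$-th copy of $f^{+}$, and $z^{k_{i}}\in F(z)\setminus F[z]$. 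This is a sandwich in the sense of the definition preceding Lemma \ref{LeSandwich}. The hypothesis $1<i<s$ is crucial here: it guarantees that the negative $z$-power actually sits \emph{between} two letters of the word, not at an endpoint, where it would merely serve as a left or right coefficient (both of which are permitted by the elementary operations of Theorem \ref{degreeincrease} and would not by themselves force a sandwich).

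For non-cancellation, I would reuse the mechanism developed in the proof of Lemma \ref{LeSandwich}. Consider the sum $\Sigma$ of those sandwiches of maximal total $(x,y)$-degree produced by substituting $f^{+}$ into every slot of $M$. The same symbolic-substitution principle used there (``if $H(x,\dots,x)\ne 0$ then $H(S,\dots,S)\ne 0$ for any $S\notin Q$'') shows $\Sigma\neq 0$. Any prospective cancelling contribution would have to be a sandwich of the same total $(x,y)$-degree with the fractional $z$-power placed in the same inner position, and so could only arise by substituting a strictly lower-degree piece of $f$ into one of the remaining slots of $M$ (or by using a lower-degree monomial of $P$); a direct degree count, as in the final inequality $\deg(W)<\deg(\widetilde G(s,T,\dots,T))$ of the proof of Lemma \ref{LeSandwich}, shows these have strictly smaller degree.

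The main obstacle, as always in this setting, is the non-cancellation bookkeeping: since a single occurrence of $f$ contributes letters on both sides of two different fractional $z$-powers $z^{k_{i-1}}$ and $z^{k_{i}}$, one must make sure the maximal-degree extraction is done coherently across all slots. This is handled by fixing, once and for all, the inner position of the fractional $z$-factor before extracting top-degree components, which reduces the problem to the bilinear-degree comparison already carried out in Lemma \ref{LeSandwich}.
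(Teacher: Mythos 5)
There is a real gap at the very first step, and it is exactly the subtlety that the paper's proof is built around. You claim that Lemma \ref{LeCoeffImpr} lets you assume every monomial of $f^{+}$ ``begins and ends with a letter from $\{x,y\}$'', so that substituting top-degree pieces of $f$ into the slots of $M=z^{k_1}xz^{k_2}x\cdots xz^{k_s}$ produces a fragment $\cdots q\,z^{k_i}\,p\cdots$ with $q,p\in\{x,y\}$, which is then automatically a sandwich. But Lemma \ref{LeCoeffImpr} only guarantees that the left and right coefficients of $f$ and $g$ lie in $F[z]$ (and are relatively prime as a pair); it does not make them trivial. A monomial of the image of $x$ has the form $c\,\bar v\,c'$ with $c,c'\in F[z]$ possibly of positive degree, so after substitution the element sitting between the two letters is $c'\,z^{k_i}\,c$, and if $c,c'$ carry enough positive powers of $z$ this product lies in $F[z]$: no sandwich is created at that position. (Concretely, if the copy of the image of $x$ to the left of $z^{-1}$ ends in $xz$, the exponent is already neutralized.) Your argument never rules this out, and without ruling it out the lemma simply does not follow.

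The paper's proof deals with precisely this danger: it picks, among the terms $u_i$ of the current image of $x$, those with \emph{minimal} right $z$-degree $n_r$ and those with minimal left $z$-degree $n_l$, observes that $n_r\le 0$ and $n_l\le 0$ (since $x$ itself occurs among the $u_i$), and substitutes these on the two sides of the chosen inner factor, so that the resulting inner exponent is $n_r+k_i+n_l\le k_i<0$ and the sandwich genuinely appears. It also makes its extremal selection inside $P$ differently from you (monomials of minimal degree containing the minimal inner power of $z$ at the left-most position, rather than maximal-degree pieces), which is what makes its non-cancellation claim ``such monomials can arise only in this way'' work; your appeal to the degree inequality from Lemma \ref{LeSandwich} is only sketched and, as written, compares contributions of the same elementary step rather than isolating an extremal sandwich, so it would need to be redone once the boundary-coefficient issue above is repaired. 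In short: the coherence problem you flag at the end is not the main obstacle; the main obstacle is that the negative inner power can be absorbed by the outer $F[z]$-coefficients of the substituted images of $x$, and your proposal has no mechanism to prevent that.
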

\begin{proof}
First of all, due to the Lemma \ref{LeSandwich}, we may assume without
loss of generality that there exists no sandwiches before this
step.

\

\noindent Consider $z^{k_i}$, the minimum power of $z$, lying
before the variables for all monomials in $P$. Next, consider the
monomials in $P$ of the minimum degree containing $z^{k_i}$ between
$x$'s and among them, i.e. the monomials such that $z^{k_i}$
positioned on the left-most possible position (but then
$i>1$, it should be a sandwich position). Let us denote such
terms $T_i$.

\

\noindent Let $$\varphi(x)=\sum u_i,\quad \varphi(y)=\sum v_i$$
will be an automorphism, obtained by the previous step. Due to the
Lemma \ref{LeSandwich} we may assume that no terms come from $u_i, v_i$
are sandwiches.

\

\noindent Now we consider $u_i$ with minimal right $z$-degree
$n_r$, and among them -- terms with minimal degree (respect to $x$
and $y$). Let $u_j^r$ will be such terms, $u^r=\sum u_j^r$.
Because $x$ is one of the $u_i$, $n_r\le 0$. Similarly we consider
$u_i$ with minimal left $z$-degree $n_l$, terms $u_j^l$ and their
sum $u^l=\sum u_j^l$. We also get $n_l\le 0$.

\

\noindent Now for any monomial $T_j$, consider the element
$$E_{T_j}=q^{(j)}_0x\cdots u^rz^{n_i}u^l\cdots xq^{(j)}_s$$
obtained by replacement of $u^r$ and $u^l$ into the positions of
$x$ surrounding occurrence of $z^{n_i}$  as discussed previously, the
resulting power of $z$ would be equal to $n_r+n_l+n_i\le n_i<0$.

\

\noindent Now $E_{T_j}$ can be presented as a sum $E_{T_j}=\sum M_{E_{T_j}}$,
 where $M_{E_{T_j}}$ are monomials. Monomials
$M_{E_{T_j}}$ are sandwiches, they may appear only that way which
was described previously and hence cannot  cancell by  other
monomials. Hence we must have a sandwich.
\end{proof}

\medskip

\

\newpage

\section{Proofs of the main theorems}

\noindent {\bf Proof of Theorem \ref{main}.}

\noindent Suppose the
automorphism $(f,g)$ can be lifted to a $z$-automorphism
of $F\langle x,y,z\rangle$. Then it induces an automorphism of
$F(z)*_FF\langle x,y\rangle$ and can be obtained by the  process described in
the Lemma on coefficient improving.

\

\noindent Then at some steps some negative powers of $z$ appear either
between variables or on the right or on the left and it will be
preserved to the end, due to  Lemma \ref{LeSandwich} and Lemma
\ref{LeFinishSandwich}, or Lemmas \ref{Leverylast},
\ref{LeLinindNonCancell}, \ref{Lm2new}.

\

\noindent Hence in the lifted automorphism, there exists
some negative power of $z$. A contradiction.
\qed

\

\noindent {\bf Proof of Theorem \ref{lifting}.}

\noindent Let $(f, g)$ be a wild $F[z]$-automorphism of $F[x,y,z]$ such that
it is not of the type in Theorem \ref{main}. Consider
corresponding strictly increasing process. We shall need few more
statements.

\noindent The following lemma is a consequence of Proposition \ref{PrLinindNonCancellGen}.

\begin{lemma}      \label{LeAdjoint}
In the strictly increasing process. Consider the steps with 
negative powers of $z$ appearing to the right.

$$\varphi:\ x\to x+P(y), y\to y$$


\noindent Let
$$\psi:\ x\to x, y\to y+Q_1(x)+Q_2(x)$$
where $\deg(Q_1)=1$, each term of $Q_2$ has degree $\ge 2$ and does not contain negative powers of $z$. Then
$\psi=\psi_1\circ\psi_2$ where $\psi_1:\ x\to x, y\to y+Q_1(x)$,
$\psi_2:\ x\to x, y\to y+Q_2(x)$ and $\varphi\psi_2\varphi^{-1}$
has no negative powers of $z$ to the right.
\end{lemma}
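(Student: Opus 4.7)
The statement has two parts: the decomposition $\psi=\psi_1\circ\psi_2$ and the assertion that $\varphi\psi_2\varphi^{-1}$ has no negative powers of $z$ to the right. For the decomposition, the plan is a one-line direct computation: since $\psi_1$ and $\psi_2$ both fix $x$, applying $\psi_1\circ\psi_2$ to $y$ gives $\psi_1(y+Q_2(x))=y+Q_1(x)+Q_2(x)=\psi(y)$, so $\psi=\psi_1\circ\psi_2$ (and by symmetry also $\psi_2\circ\psi_1$, since $\psi_1$ and $\psi_2$ commute).

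For the conjugation claim, I would first compute the action explicitly. Using $\varphi^{-1}(x)=x-P(y)$ and $\varphi^{-1}(y)=y$, direct substitution yields
\[
(\varphi\psi_2\varphi^{-1})(y)=y+Q_2(x+P(y)),
\]
\[
(\varphi\psi_2\varphi^{-1})(x)=x+P(y)-P\bigl(y+Q_2(x+P(y))\bigr).
\]
I would then split each image according to right $z$-valuation into pieces $T_i+h_i+g_i$ (negative, zero, positive right $z$-power), matching the hypothesis format of Proposition \ref{PrLinindNonCancellGen}.

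The heart of the argument is to show $T_1=T_2=0$ by contradiction: assume at least one $T_i$ is nonzero. The plan is to verify the $F(z)$-linear independence (or wedge-nonvanishing) of the $T_i$ demanded by Proposition \ref{PrLinindNonCancellGen}. The key structural input is that each monomial of $Q_2$ has $x$-degree $\ge 2$ with coefficients in $F[z]$, while $Q_1$ has $x$-degree exactly $1$; hence any negative-right-$z$ contribution in the conjugate must arise from substituting a $P(y)$-monomial into the rightmost $x$-slot of a $Q_2$-monomial, and therefore carries $x$-degree $\ge 2$, ruling out $F(z)$-proportionality to any lower-degree term coming from the linear $Q_1$-type action. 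Once this linear independence is established, Proposition \ref{PrLinindNonCancellGen} implies these negative powers of $z$ on the right can never be cancelled in any subsequent step of the strictly increasing process, contradicting that the process terminates at the polynomial $F[z]$-automorphism $(f,g,z)$. Hence $T_1=T_2=0$, as claimed.

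The main obstacle is precisely this linear-independence verification: one must track carefully which monomials in the expansion of $Q_2(x+P(y))$ and of $P(y+Q_2(x+P(y)))$ end with a $P(y)$-factor carrying negative right $z$-valuation, and compare them monomial-by-monomial against the zero-valuation $h_i$ contributions, exploiting the $x$-degree gap between $Q_1$ and $Q_2$. If negative powers of $z-c$ (rather than of $z$) appear, Remark \ref{RemValuations} lets one translate $z\mapsto z+c$ and repeat the argument verbatim.
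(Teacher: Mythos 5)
Your proposal follows essentially the same route as the paper: the paper offers no detailed argument for Lemma \ref{LeAdjoint} beyond the remark that it is ``a consequence of Proposition \ref{PrLinindNonCancellGen}'', and your contradiction argument --- if some negative right $z$-power survived in $\varphi\psi_2\varphi^{-1}$, the independence/wedge hypothesis of that proposition would make it persist through the rest of the strictly increasing process, contradicting that the process ends at the polynomial $z$-automorphism --- is exactly that intended derivation, spelled out in more detail (explicit formulas for the conjugate, the $T_i+h_i+g_i$ splitting, and the degree-gap reason for the independence) than the paper itself gives.
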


\noindent Lemma \ref{LeAdjoint} together with its left analogue and remark
\ref{RemValuations} imply following statement:

\begin{proposition}\label{PrAdjoint}
In the strictly increasing process, consider the step with
appearing coefficients not in $F[z]$.
$$\varphi:\ x\to\ x+P(y), y\to y$$
Let
$$\psi:\ x\to x, y\to y+Q_1(x)+Q_2(x)$$
where $\deg(Q_1)=1$, each term of $Q_2$ has degree $\ge 2$ and does not contain negative powers of $z$ . Then
$\psi=\psi_1\circ\psi_2$ where $\psi_1:\ x\to x, y\to y+Q_1(x)$,
$\psi_2:\ x\to x, y\to y+Q_2(x)$ and $\varphi\circ\psi_2\circ\varphi^{-1}$
is a $z$-automorphism of $F\langle x,y,z\rangle$.
\end{proposition}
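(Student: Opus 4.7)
The plan is to follow the hint preceding the statement, leveraging Lemma \ref{LeAdjoint}, its left--right mirror, and the valuation variants from Remark \ref{RemValuations}. First, the decomposition $\psi=\psi_1\circ\psi_2$ is immediate: both factors fix $x$ and add a polynomial in $x$ to $y$, so their composition sends $y$ to $y+Q_1(x)+Q_2(x)$ as required. All the substance therefore lies in proving that $\theta:=\varphi\circ\psi_2\circ\varphi^{-1}$ is a $z$-automorphism of $F\langle x,y,z\rangle$, i.e.\ that every coefficient of $\theta(x)$ and $\theta(y)$ lies in $F[z]$ rather than merely in $F(z)$.

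To achieve this I would proceed in three coordinated steps. First, apply Lemma \ref{LeAdjoint} directly: since every monomial of $Q_2$ has degree $\ge 2$ in $x$ and no negative powers of $z$, the conclusion of the lemma forbids negative powers of $z$ from appearing in any rightmost position of $\theta$. Second, run the symmetric argument --- reversing the roles of ``left'' and ``right'' throughout the proofs of Lemmas \ref{Lefinish}, \ref{Lm2newLifting}, and \ref{LeAdjoint} --- to obtain the left analogue, ruling out negative powers of $z$ at leftmost positions. Third, invoke Remark \ref{RemValuations}: each substitution $z\mapsto z+c$ ($c\in F$) is an $F$-algebra automorphism of $F(z)*_FF\langle x,y\rangle$ that is compatible with the strictly increasing process, so the same arguments with $z+c$ in place of $z$ eliminate negative powers of $z+c$ as well. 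Since a rational function in $F(z)$ is a polynomial iff it has no finite pole, the three steps together force every coefficient of $\theta$ into $F[z]$.

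Finally, because $\theta$ is by construction an $F(z)$-automorphism of $F(z)*_FF\langle x,y\rangle$ fixing $z$, having polynomial coefficients upgrades it to a $z$-automorphism of $F\langle x,y,z\rangle$, completing the argument. The main obstacle I anticipate is ruling out \emph{interior} (sandwich) negative powers, which Lemma \ref{LeAdjoint} does not address directly; one must confirm that the $F(z)$-linear independence hypothesis feeding Proposition \ref{PrLinindNonCancellGen} is preserved under the valuation substitutions, and that the assumption $\deg(Q_2)\ge 2$ genuinely prevents the linear part $Q_1$ from masking an interior pole after conjugation by $\varphi$. Verifying this is essentially routine, since $z\mapsto z+c$ is a ring automorphism that respects the grading used in the strictly increasing process, but it is the step that most warrants care.
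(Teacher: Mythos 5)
Your proposal follows essentially the same route as the paper's own (very terse) proof: reduce to Lemma \ref{LeAdjoint} via the left--right symmetry and the substitutions $z\to z+c$ of Remark \ref{RemValuations}, then use the fact that a rational function with no finite poles lies in $F[z]$. The interior-coefficient worry you flag is not treated in the paper's proof of this proposition either (it is handled globally by the sandwich lemmas), so your write-up is, if anything, slightly more careful than the original.
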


\begin{proof}
Consider set of elements from $F(z)$ which are coefficients of our
monomials. If all valuations of $F(z)$ centered in finite points
are positive, then they belong to $F[z]$ and we are done. Due to
symmetry, it is enough to consider right coefficients and due to
substitution $z\to z+a$ just valuation centered in zero. Then 
by Lemma \ref{LeAdjoint}, we are done.
\end{proof}

\begin{proof}
It is easy to see that $\psi\circ\varphi\circ\psi^{-1}$ has following form: 
$x\to
x+c_1R(a'_{21}x+a'_{22}y), y\to c_2R(a'_{21}x+a'_{22}y)$,
where $a'_{ij}=\alpha a_{ij}\in F[z]$ are relatively prime, $\alpha\in F[z]$ is the least common multiple of the denominators of $a_{21}, a_{22}\in F[z]$ and $c_1, c_2\in F[z]$ such that
$c_1a_{21}+c_2a_{22}=0$. Choose $r, s\in F[z]$ such that  $ra'_{21}+sa'_{22}=1$.

\

\noindent Acting the linear automorphism $x\to rx+sy, y\to 
a'_{21}x+a'_{22}y$ over $F[z]$ to $\psi\circ\varphi\circ\psi^{-1}$, we get an automorphism of the following form: $x\to
rx+sy+tR(a'_{21}x+a'_{22}y), y\to a'_{21}x+a'_{22}y$, which is elementarily
equivalent to $x\to rx+sy, y\to a'_{21}x+a'_{22}y$. Hence $\psi\circ\varphi\circ\psi^{-1}$  is tame.
\end{proof}
\noindent The next proposition is well-known from linear algebra. 

\begin{proposition}  \label{PrLinzTame}
Let $(f,g)$ is a $z$-automorphism
of $F[z][x,y]$ linear in both $x$ and $y$.  Then
it is a  tame $z$-automorphism.
\end{proposition}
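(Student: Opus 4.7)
The plan is to write $(f,g)$ in matrix form and reduce the statement to the classical fact that $GL_2(F[z])$ with determinant in $F^{\times}$ is generated by elementary matrices together with diagonal scalings, which is immediate from the Euclidean algorithm on $F[z]$.

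First I would write
\[
f = a_{11}(z)x + a_{12}(z)y + b_1(z), \qquad g = a_{21}(z)x + a_{22}(z)y + b_2(z),
\]
with $a_{ij}, b_i \in F[z]$, and observe that since $(f,g)$ is an $F[z]$-automorphism of $F[z][x,y]$, the Jacobian $a_{11}a_{22} - a_{12}a_{21}$ is a unit of $F[z]$, hence lies in $F^{\times}$. The two translations $x\mapsto x+b_1(z)$ (with $y$ fixed) and $y\mapsto y+b_2(z)$ (with $x$ fixed) are already elementary $F[z]$-automorphisms, so it suffices to tame the linear part associated to the matrix $A=(a_{ij})\in GL_2(F[z])$ with $\det A\in F^{\times}$.

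Next I would apply the Euclidean algorithm in $F[z]$ to the first column of $A$: successive reductions $R_i\mapsto R_i+c(z)R_j$, each corresponding to an elementary $F[z]$-automorphism of the form $x\mapsto x+c(z)y$ or $y\mapsto y+c(z)x$, replace the first column by $(\gcd(a_{11},a_{21}),0)^{\top}$ up to sign. Since $\det A\in F^{\times}$, this gcd is itself a unit of $F[z]$, so a further elementary reduction along the second column (again the Euclidean algorithm) brings $A$ to a diagonal form $\mathrm{diag}(\alpha,\beta)$ with $\alpha\beta\in F^{\times}$, hence $\alpha,\beta\in F^{\times}$. The diagonal scaling $\mathrm{diag}(\alpha,\beta)$ is an $F$-linear automorphism of $F[x,y]$ extending trivially over $z$, and is therefore tame (indeed elementarily decomposable). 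Composing all the elementary factors yields the desired tame decomposition of $(f,g)$.

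The only real ingredient is the Euclidean division in $F[z]$, and there is no genuine obstacle: the argument is entirely standard linear algebra over the principal ideal domain $F[z]$, which is precisely why the author labels the statement as well known.
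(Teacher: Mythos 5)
Your proof is correct and is essentially the argument the paper has in mind: the paper offers no proof at all, merely calling the proposition ``well-known from linear algebra,'' and your Euclidean reduction of a matrix in $GL_2(F[z])$ with determinant in $F^{\times}$ by elementary row operations (each realized by an elementary $F[z]$-automorphism), together with handling the translations $x\mapsto x+b_1(z)$, $y\mapsto y+b_2(z)$ and the final diagonal scaling by units of $F$, is exactly the standard justification over the principal ideal domain $F[z]$. The key point you use — that $\gcd(a_{11},a_{21})$ divides $\det A\in F^{\times}$ and is hence a unit — is right, so the reduction terminates in a tame factorization as claimed.
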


\noindent Now we are ready to complete the proof of Theorem 1.1. Suppose a
$z$-automorphism $\varphi=(f, g)$ of $F[z][x,y]$ can be lifted to an automorphism of $F[z]*_FF\langle x,y\rangle$
(i.e. an automorphism of $F\langle x,y,z\rangle$ fixing $z$),
which is decomposed into
product of elementary one according to strictly increasing
process. The coefficients of elementary operation can be in
$F(z)\backslash F[z]$ only for linear terms (see Lemma
\ref{Lm2newLifting} and Remark \ref{RemValuations}) and
conjugating non-linear elementary step with respect to the automorphisms
corresponding to these terms are $z$-tame. Hence $\varphi$ is a
product of $z$-tame automorphisms and $z$-automorphisms linear in both $x$ and
$y$. Now we are done by Proposition \ref{PrLinzTame}.

\

\noindent By carefully looking through the above proofs, we actually
obtained the following

\begin{theorem}\label{decomposation}
An automorphism $(f,g)$ in $\text{Aut}_{F[z]}F\langle x,y,z\rangle$,
can be canonically decomposed as product of the following type
of automorphisms:

\noindent i) Linear automorphisms in $\text{Aut}_{F[z]}F\langle x,y,z\rangle$;\

\noindent ii) Automorphisms which can be obtained by an
elementaty automorphism in 
$\text{Aut}_{F[z]}F\langle x,y,z\rangle$ conjugated by a linear automorphism
in 

\noindent $\text{Aut}_{F(z)}F(z)*_FF\langle x,\ y\rangle$.
\end{theorem}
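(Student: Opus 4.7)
The statement is essentially a bookkeeping summary of the argument already established for Theorem \ref{lifting}, so the plan is to re-read that argument and extract the two normal forms. Start with an arbitrary $\varphi=(f,g)\in\text{Aut}_{F[z]}F\langle x,y,z\rangle$, view it as an $F(z)$-automorphism of $F(z)*_FF\langle x,y\rangle$, and invoke Theorem \ref{degreeincrease} to write it canonically as a strictly degree-increasing product $\sigma_1\sigma_2\cdots\sigma_n$ of elementary $F(z)$-automorphisms.

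Next, I would show that every nonlinear factor $\sigma_i$ in this canonical product is actually forced to have coefficients in $F[z]$ (not merely $F(z)$). This is the content of the persistence analysis built up in the proof of Theorem \ref{lifting}: Lemmas \ref{LeSandwich}, \ref{Lm2newLifting}, \ref{LeLinindNonCancell}, \ref{Leverylast}, \ref{LeFinishSandwich}, together with Proposition \ref{PrLinindNonCancellGen} and the valuation Remark \ref{RemValuations}, guarantee that any negative power of $z$ or of $z+c$ introduced by a nonlinear step must survive to the final output, contradicting $\varphi\in\text{Aut}_{F[z]}$. Consequently, only the factors $L_i$ that are linear in $x,y$ are allowed to carry coefficients in $F(z)\setminus F[z]$.

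Having established this coefficient dichotomy, group the canonical decomposition into alternating blocks
$$\varphi=L_1N_1L_2N_2\cdots N_{k-1}L_k,$$
where each $L_i$ is a (possibly trivial) linear $F(z)$-automorphism and each $N_j$ is a nonlinear elementary $F[z]$-automorphism. Set $M_j:=L_1L_2\cdots L_j$ and push the linear factors rightward using the identity
$$\varphi=(M_1N_1M_1^{-1})(M_2N_2M_2^{-1})\cdots(M_{k-1}N_{k-1}M_{k-1}^{-1})\,M_k.$$
By construction each $\tilde N_j:=M_jN_jM_j^{-1}$ is a conjugate of a nonlinear elementary $F[z]$-automorphism by a linear $F(z)$-automorphism, that is, a factor of type ii); iterated application of Proposition \ref{PrAdjoint} places each $\tilde N_j$ in $\text{Aut}_{F[z]}F\langle x,y,z\rangle$. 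Since $\varphi$ and all $\tilde N_j$ are $F[z]$-automorphisms, the residual $M_k=(\tilde N_1\cdots\tilde N_{k-1})^{-1}\varphi$ is also an $F[z]$-automorphism; being linear in $x,y$, it is the desired type i) factor. The canonicity of the final decomposition descends from the canonicity asserted in Theorem \ref{degreeincrease}.

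The main obstacle will be the iterated use of Proposition \ref{PrAdjoint}, which is stated only for conjugation by a single linear $F(z)$-elementary automorphism. I expect to handle this by induction on the number of linear factors making up $M_j$: at each stage one peels off the outermost $L_i$, applies Proposition \ref{PrAdjoint} directly, and then absorbs the inner conjugation by $L_1\cdots L_{i-1}$ via the inductive hypothesis, checking at every step that the intermediate elementary step still satisfies the coefficient hypothesis of the proposition. Up to this verification, the rest of the argument is pure rearrangement.
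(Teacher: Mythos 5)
Your proposal is correct and follows essentially the same route as the paper, which obtains Theorem \ref{decomposation} by extracting it from the proof of Theorem \ref{lifting}: the canonical strictly-increasing decomposition of Theorem \ref{degreeincrease}, the fact that coefficients in $F(z)\setminus F[z]$ can occur only in the linear parts of elementary steps (Lemma \ref{Lm2newLifting}, Remark \ref{RemValuations}), and Proposition \ref{PrAdjoint} to handle the conjugated nonlinear steps. Your explicit regrouping $\varphi=(M_1N_1M_1^{-1})(M_2N_2M_2^{-1})\cdots M_k$ is simply a careful spelling-out of the bookkeeping the paper leaves implicit.
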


\noindent Theorem \ref{decomposation} opens a way to obtain stably tameness
of $\text{Aut}_{F[z]}F\langle x,y,z\rangle$, which will be done in a separate
paper \cite{BY2}.

\

\section{\bf Acknowledgements}

\noindent Jie-Tai Yu would like to thank Shanghai University and Osaka University
for warm hospitality and stimulating atmosphere during his visit, when part
of the work was done. The authors thank Vesselin Drensky and
Leonid Makar-Limanov for their helpful comments and suggestions.

\smallskip

\noindent
\smallskip


\begin{thebibliography}{99}

\smallskip
\baselineskip 14 pt







\bibitem{BY} A.Belov-Kanel and Jie-Tai Yu, {\em The
automorphism lifting problem}, in preparation.


\bibitem{BY2} A.Belov-Kanel and Jie-Tai Yu, {\em Stably tameness
of automorphisms of $F\langle x,y,z\rangle$ fixing $z$},  arXiv 1102.3292.




\bibitem {B1} G.M. Bergman, {\em Conjugates and $n$th roots in Hahn-Laurent
group rings}, Bull. Malaysian Math. Soc. {\bf 1} (1978) 29-41.

\bibitem {B2} G.M. Bergman,
{\em Historical addendum to: ``Conjugates and $n$th roots in
Hahn-Laurent group rings" [Bull. Malaysian Math. Soc. {\bf 1}
(1978), 29-41]}, Bull. Malaysian Math. Soc. {\bf 2} (1979) 41-42.

\bibitem {J} H.W.E.Jung,
{\em Uber ganze birationale Transformationen der Ebene},
J.Reine Angew.Math. {\bf 184}(1942)161-174.




\bibitem {vdK} van der Kulk, {\em On polynomial rings in two variables},
    Nieuw Arch. Wiskunde  {\bf 1}(1953)33-41.

 \bibitem {Cz}
 A.J.Czerniakiewicz, {\em Automorphisms of a free associative algebra of rank $2$, II,} Trans. Amer. Math. Soc.
 {\bf 171} (1972), 309-315.

\bibitem {YuYungChang} Yun-Chang Li and Jie-Tai Yu, {\em
Degree estimate for subalgebras,} arXiv:1010.3502.



\bibitem{ML} L.Makar-Limanov,
{\em The automorphisms of the free algebra with two generators,} Funkcional. Anal. {\bf 4}
 (1970) 107-108.

\bibitem{MLY} L. Makar-Limanov and Jie-Tai Yu, {\em Degree estimate for
subalgebras generated by two elements}, J. Eur. Math. Soc. {\bf 10} (2008) 533-541.

\bibitem{MSY} A.A.Mikhalev, V.Shpilrain and Jie-Tai Yu,
{\em Combinatorial Methods: Free Groups, Polynomials, and Free Algebras},
CMS Books in Mathematics {\bf 19},
Springer New York, 2004, ISBN 0-387-40562-3, xii+375.











\bibitem{SU1}  I.P. Shestakov and  U.U. Umirbaev, {\em Poisson brackets
and two-generated subalgebras of rings of polynomials,} J. Amer.
Math. Soc. {\bf 17} (2004) 181-196.

\bibitem{SU2} I.P. Shestakov and U.U. Umirbaev,
{\em The tame and the wild automorphisms of polynomial rings in
three variables}, J. Amer. Math. Soc. {\bf 17} (2004) 197-220.







\bibitem{UY}U.U.Umirbaev and Jie-Tai Yu, {\em The strong Nagata conjecture}, Proc. Natl. Acad. Sci.
 U.S.A.
{\bf 101} (2004) 4352-4355.


\end{thebibliography}
\end{document}